\title{Partial Category Actions on Sets and Topological Spaces}
\newtheorem{thm}{Theorem}
\newtheorem{prop}[thm]{Proposition}
\theoremstyle{definition}
\newtheorem{defi}[thm]{Definition}
\newtheorem{exa}[thm]{Example}
\newtheorem{rem}[thm]{Remark}
\begin{document}

\author{Patrik Nystedt}

\address{University West,
Department of Engineering Science, 
SE-46186 Trollh\"{a}ttan, Sweden}

\email{Patrik.Nystedt@hv.se}

\subjclass[2010]{20M30; % Representation of semigroups; actions of semigroups on sets
20L05; % Groupoids
18B40}

\keywords{Partial group action; Partial groupoid action; Globalization}

\begin{abstract}
We introduce (continuous) partial category actions on sets 
(topological spaces) and
show that each such action admits a universal globalization.
Thereby, we obtain a simultaneous generalization of 
corresponding results for groups, by F. Abadie,
and J. Kellendonk and M. V. Lawson,
and for monoids, by M. G. Megrelishvili and L. Schr\"{o}der.
We apply this result to the special case of partial groupoid actions 
where we obtain a sharpening of a result 
by N. D. Gilbert, concerning ordered groupoids,
in the sense that mediating functions between 
universal globalizations always are injective.
\end{abstract}

\maketitle

\section{Introduction}

Partial group actions on sets 
is a relaxation of classical group actions and was
introduced at the end of the last century by R. Exel \cite{Exel98}, 
with impetus coming from questions originating in the context of partial
actions of groups on C*-algebras 
(see for instance \cite{abadie1998}, \cite{exel1994} or \cite{mcclanahan}).
Since then, partial group actions on sets have  
appeared in many different contexts.
Indeed, Kellendonk and Lawson \cite{kellendonk} 
have shown that they appear naturally in the theories
%can be applied to % in several different contexts, such as, 
of ${\Bbb R}$-trees, tilings and model theory. 
In loc. cit. it was also pointed out that
the M\"{o}bius group acts globally on the Riemann sphere but
only partially on the complex plane.
Another application of partial group actions on sets
was given by Birget in \cite{birget2004} where it was shown that
Thompson's group can be defined via a partial group
action on finite words.
For examples of other types of partial actions and applications
of these in different contexts,
see Dokuchaev's extensive survey article \cite{dokuchaev2011}.

Let us recall the definition of partial group actions on sets.
Let $G$ be a group with identity element $e$
and let $X$ be a set. 
A {\it partial group action} of $G$ on $X$
is a partial function $G \times X \rightarrow X$,
denoted by $G \times X \ni (g,x) \mapsto g \cdot x$,
for all $g \in G$ and all $x \in X$ such that $g \cdot x$ is defined,
satisfying the following three axioms.
\begin{itemize}

\item[(G1)] If $x \in X$, then $e \cdot x$ is defined 
and equal to $x$.

\item[(G2)] If $x \in X$ and $g \in G$ are chosen so that 
$g \cdot x$ is defined, then $g^{-1} \cdot (g \cdot x)$
is defined and equal to $x$. % $g^{-1} \cdot (g \cdot x) = x$.

\item[(G3)] If $x \in X$ and $g,h \in G$ are chosen so that 
$g \cdot (h \cdot x)$ is defined, then 
$(gh) \cdot x$ is defined and equal to 
$g \cdot (h \cdot x)$.

\end{itemize}
Following \cite{Exel98}, every
such a partial action can equivalently
be formulated in terms of a pair 
$( \{ D_g \}_{g \in G} , \{ \alpha_g \}_{g \in G} \} ),$
where for each $g \in G$, $D_g$ is a subset of $X$
and $\alpha_g : D_{g^{-1}} \rightarrow D_g$ is a bijection
satisfying the following three axioms.
\begin{itemize}

\item[(G1$'$)] $\alpha_e = {\rm id}_X$.

\item[(G2$'$)] If $g,h \in G$, then 
$\alpha_g (D_{g^{-1}} \cap D_h) = D_g \cap D_{gh}$.

\item[(G3$'$)] If $g,h \in G$ and $x \in D_{h^{-1}} \cap D_{(gh)^{-1}}$,
then $\alpha_g ( \alpha_h (x) ) = \alpha_{gh}(x)$.

\end{itemize}
If the partial function $G \times X \rightarrow X$
is, indeed, a function, then the group action is called {\it global}.
Given a global group action by $G$ on a set $Y$, then 
it is easy to construct partial group actions 
on subsets of $Y$ by {\it restriction}.
In fact, suppose that $X$ is a subset of $Y$.
Then we can define a partial group action by $G$ on $X$
in the following way.
If $g \in G$ and $x \in X$, then $g \cdot x$ is defined
if and only if $g \cdot x \in X$.
F. Abadie \cite{abadie2003} (and independently 
J. Kellendonk and M. Lawson \cite{kellendonk}) has shown that 
restriction in fact provides us with {\it all} 
examples of partial group actions
(see Theorem \ref{kellendonktheorem}).
Namely, suppose that $X$ and $Y$ are sets both of which 
are equipped with a partial group action by $G$.
A function $i : X \rightarrow Y$ is called a {\it $G$-function}
(or {\it morphism of partial actions} as defined in \cite{abadie2003})
if for every $g \in G$ and every $x \in X$,
such that $g \cdot x$ is defined, $g \cdot i(x)$
is also defined and $i(g \cdot x) = g \cdot i(x)$.
In that case, if the group action by 
$G$ on $Y$ is global, then 
$Y$ is called a {\it globalization} of $X$.
Such a globalization is called {\it universal}
if for every $G$-function $j : X \rightarrow Z$,
where $Z$ is a set equipped with a global group 
action by $G$, there is a unique $G$-function
$k : Y \rightarrow Z$ which is {\it mediating} i.e. 
with the property that $j = k \circ i$.

\begin{thm}[F. Abadie \cite{abadie2003}]\label{kellendonktheorem}
Every partial group action on a set admits a universal globalization.
\end{thm}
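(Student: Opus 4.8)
The plan is to realize the universal globalization concretely as a quotient of the product $G \times X$. Intuitively, a pair $(g,x)$ should stand for the symbol ``$g$ applied to $x$'' once the action has been made total, so that the globalized action is nothing but left translation in the first coordinate. Accordingly, I would define a relation $\sim$ on $G \times X$ by setting $(g,x) \sim (h,y)$ if and only if $(h^{-1}g) \cdot x$ is defined and equal to $y$, and then take $Y := (G \times X)/{\sim}$, writing $[g,x]$ for the class of $(g,x)$ and $i : X \to Y$ for the map $x \mapsto [e,x]$.

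The first task is to check that $\sim$ is an equivalence relation, and this is where the three axioms enter. Reflexivity is immediate from (G1), since $g^{-1}g = e$ and $e \cdot x = x$. Symmetry follows from (G2): if $y = (h^{-1}g) \cdot x$, then applying $(h^{-1}g)^{-1} = g^{-1}h$ returns $x$, so $(g^{-1}h) \cdot y = x$. The one genuinely substantial point --- and the step I expect to be the main obstacle --- is transitivity. Given $(g,x) \sim (h,y)$ and $(h,y) \sim (k,z)$, one has $z = (k^{-1}h)\cdot\bigl((h^{-1}g)\cdot x\bigr)$, and (G3) is exactly what is needed to collapse this composite: it forces $(k^{-1}g)\cdot x$ to be defined and equal to $z$, which is the assertion $(g,x)\sim(k,z)$. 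It is worth noting that each axiom is used precisely once here, which is a good sign that the relation is the ``right'' one.

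Next I would equip $Y$ with the action $h \cdot [g,x] := [hg,x]$. Well-definedness is a short computation: if $(g,x)\sim(g',x')$, then $(hg)^{-1}(hg') = g^{-1}g'$, so the defining condition is unchanged by left multiplication. This action is manifestly total and satisfies the group-action axioms, since translation in the first coordinate is associative and fixed by $e$. The map $i$ is then a $G$-function: whenever $g \cdot x$ is defined one has $[e, g\cdot x] = [g,x] = g\cdot[e,x]$, the first equality being an instance of $\sim$ that again rests on (G2). Injectivity of $i$ is immediate, since $[e,x]=[e,x']$ forces $x = e\cdot x = x'$.

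It remains to verify the universal property. Given a global action of $G$ on a set $Z$ and a $G$-function $j : X \to Z$, I would define $k : Y \to Z$ by $k([g,x]) := g \cdot j(x)$. Well-definedness uses both that $j$ is a $G$-function and that $Z$ is global: if $y = (h^{-1}g)\cdot x$, then $j(y) = (h^{-1}g)\cdot j(x)$, whence $h\cdot j(y) = g\cdot j(x)$ after invoking totality of the action on $Z$. One checks directly that $k$ is $G$-equivariant and that $k \circ i = j$. Finally, uniqueness is forced by equivariance together with the factorization $[g,x] = g\cdot i(x)$: any mediating $G$-function $k'$ must satisfy $k'([g,x]) = g\cdot k'(i(x)) = g\cdot j(x)$, so $k' = k$.
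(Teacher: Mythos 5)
Your proof is correct, and the underlying construction is the same one the paper uses: a quotient of pairs $(g,x)$ on which $G$ acts by left translation in the first coordinate. Indeed, for a group the paper's set $\overline{X}$ is all of $G\times X$ (by (G1)), and its relation, condition (i) of Definition \ref{definitionsim}, reduces exactly to your condition $(h^{-1}g)\cdot x = y$. The genuine difference is in how the equivalence relation is obtained. You exploit the group structure to show that $\sim$ is \emph{already} an equivalence relation --- symmetry from (G2), transitivity from (G3) --- so every subsequent verification (well-definedness of the action, of the mediating map $k$, injectivity of $i$) is a one-step computation. The paper cannot do this: in its setting $G$ is an arbitrary category (or monoid), morphisms need not have inverses, and the relation $\sim$ is only reflexive (Proposition \ref{reflexive}); it must therefore pass to the generated equivalence relation $\simeq$ and run every argument by induction over chains of transitions $(g_1,x_1)\rightarrow\cdots\rightarrow(g_n,x_n)$ (Remark \ref{generated}, Proposition \ref{isdefined}, and the proof of Theorem \ref{maintheorem}). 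So your argument is shorter and more transparent, but it is genuinely tied to the group case: symmetry of $\sim$ fails without inverses, which is precisely the obstacle the paper's machinery is built to circumvent in order to cover monoids and categories. As a sanity check on your closing remark, note that your uniqueness argument also silently uses globality of the action on $Y$ (so that $g\cdot[e,x]$ is defined and equals $[g,x]$), which is fine, and that your well-definedness computation for $h\cdot[g,x]$ rests on the identity $(hg')^{-1}(hg)=g'^{-1}g$, i.e.\ invariance of the defining condition under left translation --- again a group-theoretic step with no categorical analogue.
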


Moreover, it follows from \cite[Theorem 3.4]{kellendonk} 
that the universal globalization $Y$ can be chosen so that
$i : X \rightarrow Y$ is injective, the global action by $G$ on 
$i(X)$ induces the original partial action on $X$ and that
$Y$ is the best possible globalization of $X$ in the sense that
if we are given another injective globalization 
$j : X \rightarrow Z$, then the mediating $G$-function 
$k : Y \rightarrow Z$, from above, is injective.

Continuous partial actions of groups on topological spaces
and the existence of universal globalizations
have been studied independently by 
Abadie \cite{abadie2003}, and Kellendonk and Lawson \cite{kellendonk}.
In this context $G$ is topological group, 
$X$ is a topological space, the set
$\Gamma = \{ (g,x) \in G \times X \mid g \cdot x \ {\rm is} \
{\rm defined} \}$ is open in $G \times X$,
the action $\Gamma \rightarrow X$
is continuous and for every $g \in G$, $D_g$ is open in $X$ and 
$\alpha_g : D_{g^{-1}} \rightarrow D_g$ 
is a homeomorphism.

When discussing generalizations of partial group actions, 
it is natural to formulate the following general questions.
\begin{itemize}

\item[(Q1)] If $G$ is any set equipped with a, possibly partial, composition law,
can we define what it should mean for $G$ to act partially on a set $X$?

\item[(Q2)] Given such a definition, is there a universal globalization?

\end{itemize}
M. G. Megrelishvili and L. Schr\"{o}der \cite{megrelishvili}
have suggested an answer to (Q1) when $G$ is a {\it monoid}
and they give an affirmative answer to (Q2) for such $G$
(see Theorem \ref{megrelishvilitheorem}).
Indeed, let $G$ be a monoid with identity element $e$ and let $X$ be a set.
Following \cite{megrelishvili}, we say that
a partial monoid action of $G$ on $X$
is a partial function $G \times X \rightarrow X$
satisfying the following two axioms.
\begin{itemize}

\item[(M1)] If $x \in X$, then $e \cdot x$ is defined 
and equal to $x$.

\item[(M2)] If $x \in X$ and $g,h \in G$ are chosen so that
$h \cdot x$ is defined, then 
$(gh) \cdot x$ is defined if and only if $g \cdot (h \cdot x)$
is defined, and, in that case, $(gh) \cdot x = g \cdot (h \cdot x)$.

\end{itemize}
Note that the partial monoid actions of a group are 
precisely its partial group actions.
In other words, if the monoid $G$ is a group,
then (G1)-(G3) hold if and only if (M1)-(M2) hold
(see \cite[Proposition 2.4]{megrelishvili}). 
The concepts of $G$-functions and (universal) globalizations
for partial monoid actions are defined precisely as in the 
case of partial group actions (see \cite[Definition 2.5]{megrelishvili}).

\begin{thm}[M. G. Megrelishvili and L. Schr\"{o}der \cite{megrelishvili}]\label{megrelishvilitheorem}
Every partial monoid action on a set admits a universal globalization.
\end{thm}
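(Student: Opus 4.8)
The plan is to realize the universal globalization as a quotient of the ``free'' product $G \times X$ by the coarsest equivalence relation that forces the prospective global action to be associative and to restrict to the given partial action. Concretely, I would introduce on $G \times X$ the relation $\approx$ defined by declaring $(gh, x) \approx (g, h \cdot x)$ whenever $h \cdot x$ is defined, and then let $\sim$ be the equivalence relation it generates, i.e. its reflexive, symmetric and transitive closure. Put $Y := (G \times X)/{\sim}$ and write $[g,x]$ for the class of $(g,x)$. The candidate globalization map is $i : X \to Y$, $i(x) = [e,x]$, and the candidate global action is $a \cdot [g,x] := [ag,x]$ for $a \in G$.

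Second, I would verify that this action is well defined and global. The key observation is that left translation by a fixed $a \in G$ carries a generating pair to a generating pair: applying $a$ to $(gh,x) \approx (g, h\cdot x)$ yields $((ag)h, x) \approx (ag, h\cdot x)$, which is again an instance of $\approx$. Hence left translation preserves $\sim$ and descends to $Y$. The identities $e \cdot [g,x] = [g,x]$ and $(ab)\cdot[g,x] = a\cdot(b\cdot[g,x])$ are then immediate from $eg=g$ and associativity in $G$. I would also check that $i$ is a $G$-function: whenever $g\cdot x$ is defined, the instance of $\approx$ with first factor $e$ gives $(g,x) = (eg,x) \approx (e, g\cdot x)$, so that $g\cdot i(x) = [g,x] = [e, g\cdot x] = i(g\cdot x)$, and $g \cdot i(x)$ is automatically defined because the action on $Y$ is global.

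Third, for the universal property, given a set $Z$ with a global $G$-action and a $G$-function $j : X \to Z$, I would define $k : Y \to Z$ by $k([g,x]) := g\cdot j(x)$. Well-definedness reduces to checking that $k$ is constant on generating pairs, namely $k(gh,x) = (gh)\cdot j(x) = g\cdot(h\cdot j(x)) = g\cdot j(h\cdot x) = k(g, h\cdot x)$, where the middle equality uses that the action on $Z$ is global and the next uses that $j$ is a $G$-function (with $h\cdot x$ defined). Then $k$ is itself a $G$-function, since $k(a\cdot[g,x]) = (ag)\cdot j(x) = a\cdot k([g,x])$; it mediates, since $k(i(x)) = e\cdot j(x) = j(x)$; and it is the unique such map, because any mediating $G$-function $k'$ must satisfy $k'([g,x]) = k'(a\cdot i(x))|_{a=g} = g\cdot k'(i(x)) = g\cdot j(x)$.

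The main obstacle is organizational rather than computational. Because a monoid need not have inverses, the relation $\approx$ runs only in one direction, so unlike the group case one cannot hope to collapse $\sim$ to a single elementary move and must instead argue throughout with the full generated equivalence relation, i.e. with zig-zag chains. The discipline that makes this painless is to confirm that each structural map -- left translation and the mediating $k$ -- respects the \emph{generating} relation, since a map respecting the generators automatically respects their closure; this is exactly what the computations above establish. Finally I would remark that the partial-action axioms (M1)--(M2) are what make $i$ faithfully encode the original partial action (so that $Y$ genuinely globalizes $X$ with the partial action recovered along $i$), whereas the existence and universality of $Y$ rest only on the formal quotient construction just described.
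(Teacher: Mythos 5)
Your proposal is correct and follows essentially the same route as the paper: the paper proves the general category version (Theorem \ref{maintheorem}) by quotienting $\overline{X}$ by the equivalence relation generated by $(g'h,x)\sim(g',h\cdot x)$, letting $G$ act by left translation, and defining the mediating map by $k([g,x])=g\cdot j(x)$, with the same zig-zag argument that maps respecting the generating relation respect its closure. In the one-object (monoid) case $\overline{X}=G\times X$ and the paper's extra identity clause (ii) in the definition of $\sim$ becomes vacuous, so the paper's construction reduces exactly to yours.
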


Note that, in \cite[Section 3]{megrelishvili} a globalization result for continuous
partial monoid actions is obtained.
However, there, a partial continuous monoid 
action on $X$ by $G$ presupposes that $G$
has the {\it discrete} topology.
Note also that C. Hollings \cite[Definition 2.2]{hollings}  
has defined the notion of {\it weak}
partial monoid action.
We make no attempt in this article to relate to that type of action.
We just remark that C. Hollings \cite[Theorem 5.7]{hollings}
has shown that a weak partial monoid action can be globalized
if and only if the action is a partial monoid action
in the sense of Megrelishvili and Schr\"{o}der \cite{megrelishvili}.

N. D. Gilbert \cite{gilbert} has suggested an answer to (Q1)
when $G$ is an {\it ordered groupoid}. 
He has also given an affirmative answer to (Q2) for such $G$.
A partial action in this context is an ordered premorphism
$G \rightarrow {\Bbb G}(X)$, where ${\Bbb G}(X)$ is the symmetric
groupoid of $X$ whose identities are the identity maps
on subsets of $X$ and whose morphisms are bijections
between subsets of $X$.  

\begin{thm}[N. D. Gilbert \cite{gilbert}]\label{gilberttheorem}
Every ordered groupoid action on a set admits a universal globalization.
\end{thm}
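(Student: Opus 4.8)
The plan is to imitate the quotient construction underlying the globalization theorems for groups and monoids (Theorems \ref{kellendonktheorem} and \ref{megrelishvilitheorem}), adapting it to the partial composition and the order structure of an ordered groupoid. First I would unwind the ordered premorphism $\theta \colon G \to {\Bbb G}(X)$ into the language of a partial action, writing $g \cdot x = \theta(g)(x)$ whenever $x$ lies in the domain of the partial bijection $\theta(g)$, and extracting from the premorphism axioms (order preservation, compatibility with inverses, and the lax/restriction condition relating $\theta(g)\theta(h)$ to $\theta(gh)$) a list of elementary rules governing when $g \cdot x$ is defined and how iterated applications compose. The point to isolate here is that, because $G$ is a groupoid, each object $a$ of $G$ acquires an associated subset $X_a \subseteq X$, namely the domain of the identity partial bijection $\theta(1_a)$, so that $X$ is fibred over ${\rm Ob}(G)$ and every $\theta(g)$, for $g \colon a \to b$, restricts to a bijection between subsets of $X_a$ and $X_b$.

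Next I would build the candidate globalization as a quotient. Let $\Sigma$ be the set of pairs $(g,x)$ with $g \colon a \to b$ a morphism of $G$ and $x \in X_a$, and impose the equivalence relation generated by declaring $(g,x) \sim (h,y)$ whenever $h^{-1}g \cdot x$ is defined and equal to $y$. I would set $Y = \Sigma / \sim$, equip it with the projection to ${\rm Ob}(G)$ sending $[g,x]$ to the target of $g$, and define the action by left translation, $k \cdot [g,x] = [kg,x]$ for every $k$ composable with the representing morphism. The map $i \colon X \to Y$ sends $x \in X_a$ to $[1_a,x]$. I would then check that $\sim$ is genuinely an equivalence relation, that left translation is well defined and total on composable pairs (hence global in the groupoid sense), that $i$ is an injective $G$-function, and that the global action restricts on $i(X)$ to the original partial action, exactly as in the remark following Theorem \ref{kellendonktheorem}.

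Universality would be established as in the group case of \cite{abadie2003}: given any $G$-function $j \colon X \to Z$ into a set $Z$ carrying a global $G$-action, I would define the mediating map by $k[g,x] = g \cdot j(x)$, using globality of the $Z$-action to guarantee that the right-hand side is defined and the $G$-function property of $j$ together with the rules extracted in the first step to guarantee that $k$ respects $\sim$. Equivariance of $k$ is then immediate from the definition of left translation, and uniqueness follows because $i(X)$ generates $Y$ under the action.

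The main obstacle I anticipate is bookkeeping the order structure while proving that $\sim$ is transitive and that $k$ is well defined. In the group setting these two facts rest on the clean identities (G2) and (G3); for an ordered groupoid the corresponding identities hold only up to restriction, so the inequality relating $\theta(g)\theta(h)$ and $\theta(gh)$ must be converted into honest equalities on the appropriate smaller domains before chains of identifications can be composed. Ensuring that the domains of definition match up — so that, for instance, definedness of $h^{-1}g \cdot x$ really does propagate along a composite — is the delicate part, and I expect this to be precisely where the restriction axiom of an ordered groupoid does the essential work, making it the step that requires genuine care rather than routine verification.
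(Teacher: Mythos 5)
First, a structural point: the paper does not prove this statement at all — it is quoted from Gilbert, and the paper's own contribution is the more general Theorem \ref{maintheorem} (proved in Section \ref{sectionglobalization}) together with the groupoid sharpening of Remark \ref{sharpening}. So your proposal has to be measured against that construction. Your skeleton (pairs $(g,x)$, the generated equivalence, action by left translation, mediating map $k[g,x]=g\cdot j(x)$) is exactly the paper's skeleton, but your generating relation has a genuine gap. You declare $(g,x)\sim(h,y)$ only when $h^{-1}g\cdot x$ is defined and equal to $y$; this is precisely Gilbert's relation, i.e.\ condition (i) of Definition \ref{definitionsim}, and it omits condition (ii), which identifies $(e,x)$ with $(f,x)$ when $e\neq f$ are objects whose identities both act on $x$. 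Since your relation forces $c(g)=c(h)$, the codomain of the morphism coordinate is invariant along any chain of identifications, so $[1_a,x]$ and $[1_b,x]$ stay distinct whenever $a\neq b$ and $x\in X_a\cap X_b$. Such overlaps genuinely occur: in Example \ref{example4}, which is Gilbert's own Example 4.2, one has $2\in X_e\cap X_f$. They break your argument at two places. First, $i(x)=[1_a,x]$ requires an arbitrary choice of base object $a$. Second, no choice makes $i$ a $G$-function: in Example \ref{example4}, $i(1)=[e,1]$ is forced, then $g\cdot 1=2$ forces $i(2)=g\cdot i(1)=[f,2]$, but $e\cdot 2$ is defined while $e\cdot[f,2]$ is undefined, because every representative of $[f,2]$ has codomain $f$. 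Your uniqueness argument suffers the same failure, since it needs $[1_{d(g)},x]$ to lie in $i(X)$, which is false once the choice of base object for $x$ differs from $d(g)$.

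This missing identification is not a technicality; it is exactly the point the paper isolates in Remarks \ref{comparesim} and \ref{comparegilbert}. Gilbert's theorem is true in his ordered-groupoid framework, but with his relation (the one you reproduce) the mediating maps are only partially injective, and in the sense of globalization used here (Definition \ref{equivariant}) the quotient of $\overline{X}$ by your relation is not a globalization of $X$ at all, since $X$ does not map equivariantly into it. Adding condition (ii) repairs everything at once: $i$ becomes a well-defined injective $G$-function whose image carries the original partial action (Remark \ref{injectivecategory}), universality goes through, and mediating $G$-functions between globalizations become injective (Remark \ref{sharpening}) — which is precisely the paper's sharpening of Gilbert's result. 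Note also that your final paragraph locates the anticipated difficulty in the transitivity of $\sim$ and the order bookkeeping; that part is in fact routine. The genuine obstruction sits at the objects, in the overlaps of the sets $X_e$.
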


Note, however, that N. D. Gilbert loc. cit. is not able to
extend all results for partial group actions
to partial groupoid actions. Namely, given 
another globalization $Z$ of $X$, then he only shows
that the mediating $G$-function 
is {\it partially injective} 
(see \cite[Proposition 4.10]{gilbert} and Remark \ref{comparegilbert}).

In this article, we suggest an answer to (Q1)
when $G$ is an arbitrary {\it category} 
and we give an affirmative answer to (Q2) in that case.
Thereby, we simultaneously generalize
Theorem \ref{kellendonktheorem} and
Theorem \ref{megrelishvilitheorem}, and,
in the special case of groupoid actions,
we obtain a sharpening of
Theorem \ref{gilberttheorem} in the sense 
that mediating $G$-functions between universal 
globalizations are injective.
We also show that the all of the results concerning 
topological globalizations of partial actions, mentioned
above, can be generalized to the context of
continuous partial actions on topological spaces
by topological categories.

Here is a detailed outline of the article.

In Section \ref{sectionpartialcategoryactions},
we state our conventions on categories
and we define what it should mean for a 
category $G$ to act partially (or globally) on a set $X$
(see Definition \ref{definition}).
In the same section, we give a description 
of partial category actions via subsets of $X$ and 
partially defined maps on $X$ (see Proposition \ref{alternative}
and Proposition \ref{alternative'}) analogous to 
the axioms (G1$'$)-(G3$'$).
We also show that global category actions correspond
to functors $G \rightarrow {\rm Set}$ 
(see Proposition \ref{remarkfunctor}).

In Section \ref{sectionglobalization}, 
we introduce the concepts of $G$-functions and 
universal globalization of partial actions of categories on sets
(see Definition \ref{equivariant}), and
%At the end of this section, 
we show the following result.

\begin{thm}\label{maintheorem}
Every partial category action on a set admits a universal globa\-lization.
\end{thm}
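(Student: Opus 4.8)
The plan is to imitate the constructions used for groups and monoids, realizing the universal globalization as a quotient of the set of all formal translates of elements of $X$ by morphisms of $G$. Fix a partial action of $G$ on $X$. For a morphism $g$ of $G$ write $s(g)$ and $t(g)$ for its source and target objects, and recall from Definition \ref{definition} that each $x \in X$ is associated with a unique object $o(x)$ of $G$, and that $g \cdot x$ can be defined only when $s(g) = o(x)$. Set
\[
T = \{ (g,x) \in \mathrm{Mor}(G) \times X \mid o(x) = s(g) \},
\]
and let $\approx$ be the equivalence relation on $T$ generated by the elementary identifications $(gh,x) \approx (g, h \cdot x)$, ranging over composable pairs $g,h$ and all $x$ for which $h \cdot x$ is defined. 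I would put $Y = T/{\approx}$, observe that $t$ is constant on each class so that $Y$ fibres over $\mathrm{Ob}(G)$ via $[g,x] \mapsto t(g)$, and define the candidate global action by $m \cdot [g,x] = [mg,x]$ whenever $s(m) = t(g)$. Since $mg$ exists exactly when $s(m) = t(g)$, this assignment is total on composable pairs, so --- once well-definedness is established --- it is a genuine global action, equivalently, by Proposition \ref{remarkfunctor}, a functor $G \to \mathrm{Set}$. I would take as comparison map $i \colon X \to Y$, $x \mapsto [1_{o(x)},x]$.

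First I would check that $\approx$ is compatible with left translation: if $(g,x) \approx (g',x')$ and $m$ is composable with $g$ (hence with $g'$, as targets are constant on classes), then $(mg,x) \approx (mg',x')$. Since $\approx$ is generated by the elementary relation, it suffices to verify this on a single generating move, where the image $(mgh,x),(mg,h\cdot x)$ is again an instance of the generating relation; this makes the global action well defined on classes. Next I would verify that $i$ is a $G$-function realizing the original partial action: when $g \cdot x$ is defined we have $g \cdot i(x) = [g,x]$ and $i(g\cdot x) = [1_{t(g)}, g \cdot x]$, and these coincide by the generating relation applied with $h = g$. The finer assertions that $i$ is injective and that the global action on $i(X)$ recovers the given partial action --- the content of the sharpening of Theorem \ref{gilberttheorem} --- I would isolate as separate statements, as they are not needed for the bare universal property.

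For the universal property, let $Z$ carry a global action and let $j \colon X \to Z$ be a $G$-function. I would define the mediating map $k \colon Y \to Z$ by $k([g,x]) = g \cdot j(x)$; this makes sense because $j(x)$ lies over $s(g)$ and the action on $Z$ is global. The crux is that $k$ descends to the quotient, i.e. that it respects a generating move: I must check $g \cdot j(h \cdot x) = (gh) \cdot j(x)$, which follows by first rewriting $j(h \cdot x) = h \cdot j(x)$, since $j$ is a $G$-function, and then invoking associativity of the global action on $Z$. Equivariance is the identity $k(m \cdot [g,x]) = (mg) \cdot j(x) = m \cdot k([g,x])$, the mediating property is $k(i(x)) = 1_{o(x)} \cdot j(x) = j(x)$, and uniqueness holds because any mediating $G$-function $k'$ must satisfy $k'([g,x]) = k'(g \cdot i(x)) = g \cdot k'(i(x)) = g \cdot j(x) = k([g,x])$.

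The main obstacle is the partiality of composition in a category. Unlike the monoid case, every product occurring in the argument --- in the definition of the action, in the generating relation, and in the proof that $k$ descends --- must be accompanied by a check that the relevant source and target objects agree, so that the composite really exists in $G$. Once this bookkeeping is arranged, by composing only morphisms whose fibres have been matched up as encoded in the definition of $T$, the verifications reduce to the same short computations as in the monoid setting; the only genuinely new ingredient is confirming, through the elementary relation, that each identification is consistent with these composability constraints.
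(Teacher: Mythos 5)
There is a genuine gap, and it sits exactly at the point where the category case differs from the monoid case. You assume that ``each $x \in X$ is associated with a unique object $o(x)$'' such that $g \cdot x$ can only be defined when $s(g) = o(x)$. No such object exists in this setting: axiom (C1) of Definition \ref{definition} asserts the \emph{existence} of some object $e$ with $e \cdot x$ defined (and that every object acting on $x$ fixes $x$), not uniqueness, and (C2) does not prevent several objects from acting on the same point. Indeed, in Example \ref{example1} of the paper both $e \cdot 2$ and $f \cdot 2$ are defined for the two distinct objects $e \neq f$. So your set $T$, the claimed fibration $Y \to \mathrm{ob}(G)$ via $[g,x] \mapsto t(g)$, and the map $i(x) = [1_{o(x)},x]$ are not well defined as stated.

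The omission is not cosmetic. The paper's relation (Definition \ref{definitionsim}) has, besides your generating move $(g'h,x) \sim (g', h\cdot x)$, a second clause (ii) identifying $(e,x)$ with $(f,x)$ whenever $e,f$ are objects and both $e \cdot x$ and $f \cdot x$ are defined; your relation omits it, and without it the construction fails. Concretely, in Example \ref{example1}, under your relation the class of $(e,2)$ is $\{(e,2)\}$ while the class of $(f,2)$ is $\{(f,2),(g,2)\}$, and targets are constant on classes, as you observe. Whichever class you assign to $i(2)$ --- say $[e,2]$ --- no representative of it is composable with $f$, so $f \cdot i(2)$ is undefined even though $f \cdot 2$ is defined in $X$; hence $i$ is not a $G$-function and your $Y$ is not a globalization at all (choosing $[f,2]$ fails symmetrically with $e$). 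Once clause (ii) is added, ``targets constant on classes'' breaks down, so the action on $Y$ can no longer be defined fibrewise: the paper instead declares $g \cdot [h,x]$ defined iff \emph{some} representative $(h',x') \simeq (h,x)$ satisfies $(g,h') \in G^2$ (Definition \ref{definitionglobalaction}), and then proves well-definedness by induction along chains of elementary moves (Propositions \ref{twoterms} and \ref{isdefined}), with a separate argument for the case where a representative's first coordinate is an object. That is the genuinely new ingredient of the category case, and it is precisely the bookkeeping your final paragraph assumes away.
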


In the same section, we show that $X$ injects into a 
universal globalization $Y$ so that the global category action 
of $G$ on the image of $X$ in $Y$ induces the original partial
action of $G$ on $X$ (see Remark \ref{injectivecategory}).

In Section \ref{sectionpartialgroupoidactions},
we state our conventions on groupoids and we define
what it should mean for a groupoid to act partially on a set
(see Definition \ref{definitiongroupoid}).
Then we show that the the partial category actions of a 
groupoid are precisely its partial groupoid actions
(see Proposition \ref{categorygroupoid}).
Thereafter we show, as a generalization of \cite[Theorem 3.4]{kellendonk}, 
that mediating $G$-functions between universal globalizations
always are injective (see Proposition \ref{sharpening}).
At the end of this section, we
compare our definition with the one
given by N. D. Gilbert \cite{gilbert} for partial actions of 
ordered groupoids on sets (see Remark \ref{comparegilbert}).

The results of Sections \ref{sectionpartialcategoryactions}-\ref{sectionpartialgroupoidactions}
are a preparation for Section \ref{sectioncontinuous}, where we
state our conventions on continuous partial functions and topological categories,
and we define what it should mean for a 
topological category $G$ to act continuously partially 
on a topological space $X$
(see Definition \ref{definitioncontinuousaction}).
In this section, we show the following result.

\begin{thm}\label{continuousmaintheorem}
Every continuous partial category action on a
topological space admits a universal globalization.
\end{thm}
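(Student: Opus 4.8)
The plan is to build the topological universal globalization directly on top of the set-theoretic one furnished by Theorem~\ref{maintheorem}. Write $Y$ for the underlying set of the universal globalization of the partial action of $G$ on $X$, let $\Omega \subseteq G_1 \times X$ denote the subspace of pairs $(g,x)$ out of which $Y$ is built, and let $q : \Omega \to Y$ be the canonical surjection appearing in that construction. When $G$ is a topological category and $X$ a topological space, I would give $G_1 \times X$ the product topology, $\Omega$ the subspace topology, and finally $Y$ the quotient topology induced by $q$. The inclusion $i : X \to Y$ then factors as $q$ precomposed with the map $x \mapsto (1_{\pi(x)}, x)$, where $\pi : X \to G_0$ is the (continuous) anchor map and $x \mapsto 1_{\pi(x)}$ is continuous because the unit map $G_0 \to G_1$ of a topological category is continuous; hence $i$ is continuous at once.

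The universal property is the easy half. Given a topological space $Z$ carrying a continuous global $G$-action and a continuous $G$-function $j : X \to Z$, Theorem~\ref{maintheorem} already supplies the unique mediating $G$-function $k : Y \to Z$ with $j = k \circ i$, and its uniqueness is inherited verbatim from the set level. Because $Y$ carries the quotient topology, $k$ is continuous as soon as $k \circ q$ is, and $k \circ q$ sends $(g,x)$ to $g \cdot j(x)$; this is the restriction to $\Omega$ of the composite of the continuous map $\mathrm{id}_{G_1} \times j$ with the continuous global action of $G$ on $Z$, and is therefore continuous. Thus no extra work is required to make $k$ continuous.

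The genuinely delicate point, and the step I expect to be the main obstacle, is continuity of the global action $\Gamma_Y \to Y$ on the globalization itself. The difficulty is the familiar one that quotient maps need not remain quotient maps after forming a product with a fixed space, so one cannot directly transport continuity from the lift along $\Omega$ to the product of quotients. The standard remedy, which I would follow here as in Abadie and in Megrelishvili--Schr\"{o}der, is to prove that $q$ is an \emph{open} map. Openness of $q$ should fall out of the hypotheses built into the notion of a continuous partial action --- that $\Gamma$ is open, that each $D_g$ is open, and that each structure bijection is a homeomorphism --- together with the explicit description of the equivalence relation defining $Y$, which lets one check that the $q$-saturation of a basic open subset of $\Omega$ is again open. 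Once $q$ is open, $\mathrm{id}_{G_1} \times q$ is again a quotient map, and continuity of the action on $Y$ reduces to continuity of a map on the honest product $G_1 \times \Omega$, verified directly from the composition law of $G$ and the partial action on $X$.

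Finally I would record that $i$ is a topological embedding onto its image and that the induced global action on $i(X)$ recovers the original partial action, exactly as in the set-theoretic statement summarized in Remark~\ref{injectivecategory}; here openness of $q$ again guarantees that the subspace topology on $i(X)$ coincides with the original topology of $X$. Assembling these facts shows that $(Y,i)$, equipped with the quotient topology, is a universal globalization in the category of topological spaces with continuous global $G$-actions, which completes the proof.
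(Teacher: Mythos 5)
Your choice of topologies (product on ${\rm mor}(G)\times X$, subspace on $\overline{X}$, quotient on $Y$) and your argument for the continuity of the mediating map $k$ (factor $k\circ q$ through ${\rm id}\times j$ followed by the action on $Z$, then invoke the universal property of the quotient topology) coincide with the paper's proof. But two of your steps rely on structure that the paper's definitions do not provide. First, continuity of $i$: you factor $i$ through an ``anchor map'' $\pi : X \to {\rm ob}(G)$ and a continuous unit map. No such map exists in this setting. Axiom (C1) only asserts that for each $x$ \emph{some} object $e$ with $e\cdot x$ defined exists --- there may be several, so $\pi$ is not even well defined without choices --- and the paper's notion of topological category puts a topology only on ${\rm mor}(G)$, with no assumption that would make any choice $x\mapsto e_x$ continuous (the paper explicitly contrasts its convention with ones where $d$ and $c$ are continuous). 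What actually saves continuity of $i$ is (CA1): each $X_e$ is open, $i$ restricted to $X_e$ equals $q(e,\cdot)$, the $X_e$ cover $X$, and the restrictions agree on overlaps by clause (ii) of the definition of $\sim$. That is the role (CA1) plays in the paper's proof; your argument bypasses it and instead uses hypotheses the paper never grants.

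Second, and more seriously, your plan for continuity of the global action on $Y$ --- prove $q$ is open, using ``that $\Gamma$ is open, that each $D_g$ is open, and that each structure bijection is a homeomorphism'' --- imports the hypotheses of the group/monoid papers, not this one. Definition \ref{definitioncontinuousaction} assumes only (CA1) and (CA2): the graph $\Gamma$ is \emph{not} assumed open (graph openness is a separate, optional hypothesis, Definition \ref{definitionstaropen}, used only for Proposition \ref{open}), the sets $X_g$ for non-identity $g$ are not assumed open, and the maps $\alpha_g$ are not assumed to be homeomorphisms. So openness of $q$ cannot ``fall out of the hypotheses''; under the theorem's stated assumptions it is not available, and even under stronger assumptions you only sketch it. The paper resolves the product-versus-quotient difficulty you correctly identify in a different way: it forms $\Gamma' = \{(g,(h,x)) \in {\rm mor}(G)\times\overline{X} \mid d(g)=c(h)\}$, endows $\Gamma'' = \Gamma'/{\equiv}$ (where $(g,(h,x))\equiv(g',(h',x'))$ iff $g=g'$ and $(h,x)\simeq(h',x')$) with the quotient topology, and uses the universal property of quotient maps to obtain a continuous induced map $\beta' : \Gamma'' \to Y$ sending $[(g,(h,x))]$ to $[gh,x]$; that is, it verifies continuity of the action on $Y$ with respect to the quotient topology on the domain, rather than attempting to show that $q$ (or ${\rm id}\times q$) is open or a quotient map. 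This is precisely the device that lets the paper avoid the extra openness hypotheses your sketch needs, and without it your proposal cannot be completed as stated.
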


At the end of this section, we also show that if the category is star open
and the action is graph open (see Definition \ref{definitionstaropen}),
then the embedding of $X$, into a universal globalization,
is an open map (see Proposition \ref{open}).

\section{Partial Category Actions}\label{sectionpartialcategoryactions}

Throughout this section, $X$ denotes a set
and $G$ denotes a category.
In this section,
we state our conventions on categories and 
then we define partial set actions 
(see Definition \ref{setaction}).
Inspired by the axioms (M1)-(M2), from the 
case of partial actions of monoids on sets,
we then define partial category actions on sets
(see Definition \ref{definition}).
Thereafter, we give a description 
of partial category actions via subsets and 
partially defined maps (see Proposition \ref{alternative}
and Proposition \ref{alternative'})
analogous to the group axioms (G1$'$)-(G3$'$)
from the introduction.
At the end of this section,
we show that global category actions correspond
to set-valued functors (Proposition \ref{remarkfunctor}).

\subsection*{Conventions on categories}
The family of objects and morphisms
of $G$ is denoted by ${\rm ob}(G)$ and ${\rm mor}(G)$ respectively. 
We always assume that $G$ is {\it small} i.e.
that ${\rm mor}(G)$ is a set.
By abuse of notation,
we identify an object $e \in {\rm ob}(G)$ 
with its corresponding identity morphism,
so that ${\rm ob}(G) \subseteq {\rm mor}(G)$.
If $g \in {\rm mor}(G)$, then the domain and codomain of 
$g$ is denoted by $d(g)$ and $c(g),$ respectively.
We let $G^2$ denote the set of all pairs $(g,h) \in {\rm mor}(G) \times {\rm mor}(G)$
that are composable i.e. such that $d(g)=c(h)$.
The {\it category of sets} is denoted by Set.

\begin{defi}\label{setaction}
Suppose that $A$, $B$, $C$, $X$ and $Y$ are sets.
By a {\it partial function} $A \rightarrow B$, 
we mean a function $C \rightarrow B$ where $C \subseteq A$.
By a {\it partial set action} of $A$ on $X$,
we mean a partial function $A \times X \rightarrow X$,
denoted by $A \times X \ni (a,x) \mapsto a \cdot x$,
for all $a \in A$ and all $x \in X$ such that $a \cdot x$ is defined.
A function $i : X \rightarrow Y$ is called an {\it $A$-function}
if for every $a \in A$ and every $x \in X$,
such that $a \cdot x$ is defined, $a \cdot i(x)$
is also defined and $i(a \cdot x) = a \cdot i(x)$.
\end{defi}

\begin{defi}\label{definition}
By a {\it partial category action by $G$ on $X$}, 
we mean a partial set action by ${\rm mor}(G)$ on $X$,
in the sense of Definition \ref{setaction},
satisfying the following three axioms.
\begin{itemize}

\item[(C1)] For every $x \in X$, there is $e \in {\rm ob}(G)$
such that $e \cdot x$ is defined. 
If $f \in {\rm ob}(G)$ and $x \in X$ are chosen so that
$f \cdot x$ is defined, then $f \cdot x = x$.

\item[(C2)] If $x \in X$ and $g \in {\rm mor}(G)$
are chosen so that $g \cdot x$ is defined, then
$d(g) \cdot x$ is defined.

\item[(C3)] Suppose that $(g,h) \in G^2$ and $x \in X$ are chosen so that
$h \cdot x$ is defined. 
Then $(gh) \cdot x$ is defined if and only if
$g \cdot (h \cdot x)$ is defined, and, in that case,
$(gh) \cdot x = g \cdot (h \cdot x)$.

\end{itemize}
We say that such an action by $G$ on $X$ is {\it global}
if the following axiom holds.
\begin{itemize}

\item[(C4)] If $g \in {\rm mor}(G)$ and $x \in X$
are chosen so that $d(g) \cdot x$ is defined,
then $g \cdot x$ is defined.

\end{itemize}
\end{defi}

Suppose that $X$ is equipped with a partial
category action by $G$.
Take $g \in {\rm mor}(G)$. 
Put $X_g = \{ x \in X \mid \mbox{ $g \cdot x$ is defined } \}$
and ${}_g X = \{ g \cdot x \mid x \in X_g \}.$
If we define the function $\alpha_g : X_g \rightarrow {}_g X$
by $\alpha_g(x) = g \cdot x$, for $x \in X_g$,
then this defines a triple 
\begin{equation}\label{triple}
( \{ X_g \}_{g \in {\rm mor}(G)} , 
\{ {}_g X \}_{g \in {\rm mor}(G)} , 
\{ \alpha_g \}_{g \in {\rm mor}(G)} ),
\end{equation}
of subsets $X_g$ and ${}_g X$ of $X$ and 
functions $\alpha_g : X_g \rightarrow {}_g X$, 
for $g \in {\rm mor}(G)$.

\begin{prop}\label{alternative}
The triple (\ref{triple}) satisfies the following three properties.
\begin{itemize}

\item[(C1$'$)] $X = \cup_{e \in {\rm ob}(G)} X_e$ and
$\alpha_e = {\rm id}_{X_e}$, for $e \in {\rm ob}(G)$.

\item[(C2$'$)] If $g \in {\rm mor}(G)$, then $X_g \subseteq X_{d(g)}$.

\item[(C3$'$)] Suppose that $(g,h) \in G^2$. Then 
$X_h \cap X_{gh} = 
%X_h \cap 
\alpha_h^{-1}(X_g \cap {}_h X)$.
If $x \in X_h \cap X_{gh}$, then 
$\alpha_g(\alpha_h (x)) = \alpha_{gh}(x)$.

\end{itemize}
If the action by $G$ on $X$ is global,
then the following property holds.
\begin{itemize}

\item[(C4$'$)] If $g \in {\rm mor}(G)$, then $X_g = X_{d(g)}$. 

\end{itemize}
\end{prop}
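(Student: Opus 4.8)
The plan is to verify each property (C1$'$)--(C4$'$) directly from the definitions of $X_g$, ${}_g X$, and $\alpha_g$, translating each of the axioms (C1)--(C4) into the corresponding statement about subsets and maps. Throughout, I would keep in mind the defining equivalence $x \in X_g \iff g \cdot x$ is defined, and $\alpha_g(x) = g \cdot x$ for $x \in X_g$.

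For (C1$'$), the first equation $X = \bigcup_{e \in {\rm ob}(G)} X_e$ is exactly a restatement of the first sentence of (C1): every $x \in X$ admits some object $e$ with $e \cdot x$ defined, i.e. $x \in X_e$. The second equation $\alpha_e = {\rm id}_{X_e}$ follows from the second sentence of (C1), which says $f \cdot x = x$ whenever $f$ is an object and $f \cdot x$ is defined; hence for $x \in X_e$ we get $\alpha_e(x) = e \cdot x = x$. For (C2$'$), I would take $x \in X_g$, so $g \cdot x$ is defined, and apply (C2) to conclude $d(g) \cdot x$ is defined, i.e. $x \in X_{d(g)}$; this gives $X_g \subseteq X_{d(g)}$ immediately.

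The main work, and the step I expect to be the principal obstacle, is (C3$'$), where I must establish the set equality $X_h \cap X_{gh} = \alpha_h^{-1}(X_g \cap {}_h X)$ together with the cocycle-type identity $\alpha_g(\alpha_h(x)) = \alpha_{gh}(x)$ on that common set. Here I would unwind both sides carefully. An element $x$ lies in $\alpha_h^{-1}(X_g \cap {}_h X)$ precisely when $x \in X_h$ (so that $\alpha_h(x) = h \cdot x$ makes sense and automatically lands in ${}_h X$) and $\alpha_h(x) = h \cdot x \in X_g$, i.e. $g \cdot (h \cdot x)$ is defined. So I must show, for $x$ with $h \cdot x$ defined, that $gh \cdot x$ is defined if and only if $g \cdot (h \cdot x)$ is defined — which is exactly the biconditional in axiom (C3). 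This is where care is needed: the inclusion $\alpha_h^{-1}(X_g \cap {}_h X) \subseteq X_h$ is built into the domain of $\alpha_h$, and one must check that the membership $x \in X_{gh}$ is equivalent to $h \cdot x \in X_g$ under the standing hypothesis $x \in X_h$, invoking (C3) in both directions. Once the set equality is in hand, the identity $\alpha_g(\alpha_h(x)) = g \cdot (h \cdot x) = (gh) \cdot x = \alpha_{gh}(x)$ for $x$ in the common set is precisely the equation supplied by (C3).

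Finally, for (C4$'$) under the global hypothesis, the inclusion $X_g \subseteq X_{d(g)}$ is already (C2$'$), so it remains only to prove the reverse inclusion $X_{d(g)} \subseteq X_g$. I would take $x \in X_{d(g)}$, so $d(g) \cdot x$ is defined, and apply (C4) directly to conclude that $g \cdot x$ is defined, i.e. $x \in X_g$. Combining the two inclusions yields $X_g = X_{d(g)}$, completing the proof.
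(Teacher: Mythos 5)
Your proposal is correct and follows essentially the same route as the paper's own proof: each primed property is obtained by directly translating the corresponding axiom (C1)--(C4) through the defining equivalence $x \in X_g \iff g\cdot x$ is defined, with the biconditional in (C3) doing all the work for the set equality in (C3$'$). Your explicit remark that membership in $\alpha_h^{-1}(X_g \cap {}_h X)$ already forces $x \in X_h$ (since that is the domain of $\alpha_h$) is a point the paper handles implicitly, but there is no substantive difference in the argument.
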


\begin{proof}
(C1$'$) and (C2$'$) follow immediately from (C1) and (C2).
Now we show (C3$'$). Take $(g,h) \in G^2$ and $x \in X_h$.
From (C3) it follows that % if $x \in X_h$, then
$(gh) \cdot x$ is defined if and only if $g \cdot (h \cdot x)$
is defined. This means that % if $x \in X_h$, then
$x \in X_{gh}$ if and only if $\alpha_h(x) \in X_g$.
This implies that % if $x \in X_h$, then
$x \in X_{gh}$ if and only if $x \in \alpha_h^{-1}(X_g \cap {}_h X)$.
Hence $x \in X_h \cap X_{gh}$
if and only if $\alpha_h^{-1}(X_g \cap {}_h X)$.
From (C3) it follows that for such $x$,
the equality $\alpha_g(\alpha_h(x)) = \alpha_{gh}(x)$ holds.

Suppose that the action by $G$ on $X$ is global.
We show (C4$'$). Take $g \in {\rm mor}(G)$.
From (C2$'$) we know that $X_g \subseteq X_{d(g)}$.
On the other hand,
from (C4) it follows that $X_{d(g)} \subseteq X_g$.
Thus $X_g = X_{d(g)}$. 
\end{proof}

\begin{prop}\label{alternative'}
Starting with a triple (\ref{triple}) 
satisfying (C1$'$)-(C3$'$), then the partial set action by
${\rm mor}(G)$ on $X$ defined by $g \cdot x = \alpha_g(x)$, 
for $g \in {\rm mor}(G)$ and $x \in X_g$, is a 
partial category action by $G$ on $X$.
In that case, if (C4$'$) holds,
then this action is global.
\end{prop}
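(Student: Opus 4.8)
The plan is to verify that the partial set action $g\cdot x=\alpha_g(x)$ satisfies (C1)--(C3), and that (C4$'$) forces (C4). This is essentially the converse direction of Proposition \ref{alternative}, so I expect it to be a matter of unwinding the definitions rather than introducing any new idea. First I would check (C1). For a given $x\in X$, since $X=\bigcup_{e\in{\rm ob}(G)}X_e$ by the first half of (C1$'$), there is some $e\in{\rm ob}(G)$ with $x\in X_e$, i.e.\ $e\cdot x$ is defined; this gives the existence assertion. For the second half, if $f\in{\rm ob}(G)$ and $f\cdot x$ is defined then $x\in X_f$ and $f\cdot x=\alpha_f(x)=\alpha_f|_{X_f}(x)=x$ by the identity $\alpha_e={\rm id}_{X_e}$ in (C1$'$).

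Next I would verify (C2). Suppose $g\cdot x$ is defined, i.e.\ $x\in X_g$. By (C2$'$) we have $X_g\subseteq X_{d(g)}$, so $x\in X_{d(g)}$, which says precisely that $d(g)\cdot x$ is defined. Then I would turn to (C3), which is the heart of the matter. Take $(g,h)\in G^2$ and assume $h\cdot x$ is defined, so $x\in X_h$. I want to show that $(gh)\cdot x$ is defined iff $g\cdot(h\cdot x)$ is defined, with equality when both hold. The condition ``$(gh)\cdot x$ defined'' means $x\in X_{gh}$, hence $x\in X_h\cap X_{gh}$; the condition ``$g\cdot(h\cdot x)$ defined'' means $\alpha_h(x)\in X_g$, i.e.\ $\alpha_h(x)\in X_g\cap{}_hX$ (the second membership being automatic since $\alpha_h(x)\in{}_hX$ always), i.e.\ $x\in\alpha_h^{-1}(X_g\cap{}_hX)$. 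By the set equality $X_h\cap X_{gh}=\alpha_h^{-1}(X_g\cap{}_hX)$ in (C3$'$), these two conditions coincide, giving the ``if and only if''. When both hold, the final equation of (C3$'$) gives $\alpha_g(\alpha_h(x))=\alpha_{gh}(x)$, which is exactly $g\cdot(h\cdot x)=(gh)\cdot x$.

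Finally, for the global claim I would assume (C4$'$) and derive (C4). Suppose $d(g)\cdot x$ is defined, i.e.\ $x\in X_{d(g)}$. By (C4$'$), $X_g=X_{d(g)}$, so $x\in X_g$, which means $g\cdot x$ is defined, as required. The only point demanding any care is the bookkeeping in (C3), specifically the translation between the defined-ness conditions and the membership statements in the set-theoretic identity of (C3$'$); once one observes that $\alpha_h(x)$ lies in ${}_hX$ by construction, so that intersecting with ${}_hX$ is harmless, the equivalence is immediate. I do not anticipate a genuine obstacle here, as the proposition is the formal reverse of the already-proved Proposition \ref{alternative}.
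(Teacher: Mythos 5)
Your proof is correct and follows essentially the same route as the paper's: verify (C1)--(C3) by translating defined-ness into membership in $X_g$ and invoking (C1$'$)--(C3$'$), then derive (C4) from (C4$'$). In fact you are slightly more thorough than the paper in two spots, explicitly checking the second half of (C1) and explicitly recording the equality $(gh)\cdot x = g\cdot(h\cdot x)$ from the last clause of (C3$'$), both of which the paper leaves implicit.
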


\begin{proof}
First we show (C1).
Take $x \in X$. Since $X = \cup_{e \in {\rm ob}(G)} X_e$,
there is $e \in {\rm ob}(G)$ such that $x \in X_e$.
Since $\alpha_{e} = {\rm id}_{X_{e}}$ we get that 
$e \cdot x$ is defined and equal to $x$.

Now we show (C2).
Take $x \in X$ and $g \in {\rm mor}(G)$ such that 
$g \cdot x$ is defined i.e. such that $x \in X_g$.
From (C2$'$) we get that $x \in X_{d(g)}$.
Thus $d(g) \cdot x$ is defined.

Now we show (C3).
Take $(g,h) \in G^2$ and $x \in X$ so that
$h \cdot x$ is defined i.e. such that $x \in X_h$.
Suppose that $(gh) \cdot x$ is defined.
Then $x \in X_h \cap X_{gh}$. 
From (C3$'$) we get that $x \in \alpha_h^{-1}(X_g \cap {}_h X)$
which in turn implies that $g \cdot (h \cdot x)$ is defined.
Suppose now that $g \cdot (h \cdot x)$ is defined.
Then $x \in \alpha_h^{-1}(X_g \cap {}_h X)$.
From (C3$'$) we get that
$x \in X_h \cap X_{gh}$ which means that
$(gh) \cdot x$ is defined.

Suppose now that (C4$'$) hold.
We show that (C4) holds.
Take $g \in {\rm mor}(G)$ and $x \in X$
so that $d(g) \cdot x$ is defined.
This means that $x \in X_{d(g)}$.
From (C4$'$), we get that $x \in X_g$.
Thus $\alpha_g(x)$ is defined.
Hence $g \cdot x$ is defined.
\end{proof}

\begin{prop}\label{remarkfunctor}
Global actions
by $G$ on $X$ correspond to 
functors $G \rightarrow {\rm Set}$.
\end{prop}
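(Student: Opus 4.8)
The plan is to describe explicitly the two mutually inverse assignments between functors $G \to \mathrm{Set}$ and global actions, using the reformulation of an action in terms of the triple (\ref{triple}) and the properties (C1$'$)--(C4$'$) from Proposition \ref{alternative}.

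First I would pass from a functor to an action. Given a functor $F : G \to \mathrm{Set}$, I set $X = \coprod_{e \in \mathrm{ob}(G)} F(e)$, the disjoint union, so that each $x \in X$ lies in a unique summand $F(e_x)$. I define a partial set action of $\mathrm{mor}(G)$ on $X$ by declaring $g \cdot x$ to be defined precisely when $e_x = d(g)$, and then setting $g \cdot x = F(g)(x) \in F(c(g))$. The verification of (C1)--(C4) is routine bookkeeping with domains and codomains: (C1) and (C2) hold because $e_x$ is the unique object acting on $x$, and it does so trivially via $F(\mathrm{id}_{e_x}) = \mathrm{id}_{F(e_x)}$; (C3) holds because composability $d(g) = c(h)$ forces, whenever $h \cdot x$ is defined, that $h \cdot x \in F(c(h)) = F(d(g))$, so both $(gh) \cdot x$ and $g \cdot (h \cdot x)$ are defined and agree by functoriality $F(gh) = F(g) \circ F(h)$; and (C4) holds since $d(g) \cdot x$ defined already forces $e_x = d(g)$.

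Conversely, given a global action with associated triple (\ref{triple}), I set $F(e) = X_e$ on objects and $F(g) = \alpha_g$ on morphisms. By (C4$'$) the domain of $\alpha_g$ is $X_g = X_{d(g)} = F(d(g))$, and once I know the codomain lands in $F(c(g)) = X_{c(g)}$, functoriality follows from $\alpha_e = \mathrm{id}_{X_e}$ in (C1$'$) together with (C3$'$): globality collapses $X_h \cap X_{gh}$ to all of $X_h = F(d(h))$, so $\alpha_{gh} = \alpha_g \circ \alpha_h$ holds on the full domain. It then remains to check that the two assignments are mutually inverse.

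The step I expect to be the main obstacle is exactly this codomain claim in the action-to-functor direction: that $\alpha_g$ maps into $X_{c(g)}$ and not merely into $X$. Since (C1)--(C4) speak only of definedness and never mention codomains directly, this is not immediate. The key is the categorical identity $c(g) \circ g = g$: applying (C3) to the composable pair $(c(g), g)$ and an $x \in X_g$ shows that $(c(g)\, g) \cdot x = g \cdot x$ being defined forces $c(g) \cdot (g \cdot x)$ to be defined, i.e. $g \cdot x \in X_{c(g)}$, which is precisely ${}_g X \subseteq X_{c(g)}$. With this inclusion established both directions go through; the only residual subtlety is that the action-to-functor-to-action round trip recovers the original action on the canonical model $\coprod_{e} X_e$, so the correspondence is most cleanly read with the underlying set presented as this disjoint union.
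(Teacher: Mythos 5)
Your action-to-functor direction is essentially the paper's: the key step, that $\alpha_g$ lands in $X_{c(g)}$ (i.e. ${}_g X \subseteq X_{c(g)}$), is exactly the observation the paper makes, and your derivation via (C3) applied to the composable pair $(c(g),g)$ is the same argument the paper runs in primed form using (C3$'$) and (C4$'$). The gap is in the other direction. The paper builds the action from a functor $F$ on the \emph{plain} union $X = \cup_{e \in {\rm ob}(G)} F(e)$, declaring $g \cdot x$ defined iff $x \in F(d(g))$; you build it on the \emph{disjoint} union $\coprod_{e} F(e)$. Your recipe does yield a valid global action, but it ruins the claim that the two assignments are mutually inverse, which is the actual content of ``correspond''. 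Nothing in (C1)--(C4) forces the sets $X_e$, $e \in {\rm ob}(G)$, to be pairwise disjoint: a point of a global action may be acted on by several identities. This is not a pathology; it occurs in the paper's own Example \ref{example1}, whose globalization $Y$ satisfies $[e,2] = [f,2]$, so that this point lies in both $Y_e$ and $Y_f$. A minimal example: let $G$ be the discrete category with two objects $e \neq f$ and let $X = \{ * \}$ with $e \cdot * = *$ and $f \cdot * = *$; this satisfies (C1)--(C4).

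For such an action your round trip (action $\to$ functor $\to$ action) produces $\coprod_e X_e$, which is \emph{not} the original action and is not even isomorphic to it: in your model every point is acted on by exactly one identity, so in the minimal example above there is no $G$-function at all from $X$ to $\coprod_e X_e$ (the image of $*$ would have to admit both $e$ and $f$ acting on it), let alone a bijective one. Hence your closing claim that the round trip ``recovers the original action on the canonical model $\coprod_e X_e$'' fails in general; one cannot ``present'' $X$ as that disjoint union unless the $X_e$ were already pairwise disjoint, which is precisely what cannot be assumed. The repair is the paper's choice: take $X = \cup_{e \in {\rm ob}(G)} F(e)$, with $g \cdot x$ defined iff $x \in F(d(g))$ and $g \cdot x = F(g)(x)$. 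The triple with $X_g = F(d(g))$, ${}_g X = F(c(g))$, $\alpha_g = F(g)$ then satisfies (C1$'$)--(C4$'$), Proposition \ref{alternative'} turns it into a global action, and both round trips are literal identities (using (C1$'$) and (C4$'$) for the action-first trip), so the correspondence is an honest bijection rather than a pair of constructions that fail to invert each other.
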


\begin{proof}
Suppose that $X$ is equipped with a global category action by $G$.
Take $(g,h) \in G^2$. From (C3$'$) and (C4$'$), we get that 
$X_{d(h)} \subseteq \alpha_h^{-1}( X_{c(h)} \cap {}_h X )$.
From this inclusion, we get that
$\alpha_h( X_{d(h)} ) \subseteq X_{c(h)}$.
Since $c(h) = d(g)$, 
$X_{c(h)} = X_{d(g)}$. Also, from (C3$'$),
it follows that $\alpha_g (\alpha_h (x) ) = \alpha_{gh}(x)$,
for $x \in X_{d(h)}$.
It is now evident that the correspondence
$G \rightarrow {\rm Set}$, defined by ${\rm ob}(G) \ni e \mapsto X_e$
and ${\rm mor}(G) \ni g \mapsto \alpha_g$ defines a functor.

On the other hand, suppose that we are given a functor
$F : G \rightarrow {\rm Set}$.
For every $g \in {\rm mor}(G)$,
put $X_g = F( d(g) )$, ${}_g X = F( c(g) )$ and $\alpha_g = F(g)$.
Now put $X = \cup_{e \in {\rm ob}(G)} X_e$.
Then this defines a global action by $G$ on $X$.
\end{proof}

\section{Globalization of Partial Category Actions}\label{sectionglobalization}

Throughout this section, $X$ denotes a set and $G$ denotes a category
which acts partially on $X$.
In this section, we introduce the concepts of
$G$-functions and universal globalization of partial actions of categories on sets
(see Definition \ref{equivariant}), and
we show Theorem \ref{maintheorem}.
At the end of this section, we show that $X$ injects into a 
universal globalization $Y$ so that the global category action 
of $G$ on the image of $X$ in $Y$ induces the original partial
action of $G$ on $X$ (see Remark \ref{injectivecategory}).

\begin{defi}\label{equivariant}
Suppose that there is a partial category action by $G$
on another set $Y$.
We say that a function $i : X \rightarrow Y$ is 
a $G$-function if it is a ${\rm mor}(G)$-function
in the sense of Definition \ref{setaction}. 
In that case, if 
the category action by 
$G$ on $Y$ is global, then 
$Y$ is called a {\it globalization} of $X$.
We say that such a globalization {\it universal}
if for every 
%injective 
$G$-function $j : X \rightarrow Z$,
where $Z$ is a set equipped with a global category 
action by $G$, there is a unique $G$-function
$k : Y \rightarrow Z$ such that $j = k \circ i$.
\end{defi}

Inspired by the relation $\sim$ defined for group
actions \cite{kellendonk}, for monoid actions \cite{megrelishvili}
and for actions by ordered groupoids \cite{gilbert},
we suggest the following.

\begin{defi}\label{definitionsim}
Put $\overline{X} = \{ (g,x) \in {\rm mor}(G) \times X \mid 
\mbox{ $d(g) \cdot x$ is defined}
%x \in D_{d(g)} 
\}$ and
define the relation $\sim$ on $\overline{X}$
in the following way.
Suppose that $(g,x),(g',x') \in \overline{X}$.
Then we put $(g,x) \sim (g',x')$ if either
\begin{itemize}
\item[(i)] there is $h \in {\rm mor}(G)$
such that $(g',h) \in G^2$, $h \cdot x$ is defined and 
the equalities $g = g' h$ and $x' = h \cdot x$ hold, or
\item[(ii)] $x=x'$, $g,g' \in {\rm ob}(G)$
and both $g \cdot x$ and $g' \cdot x'$ are defined.
\end{itemize}
%Note that if $(g,x) \sim (g',x')$, then $c(g)=c(g')$.
\end{defi}

\begin{rem}\label{comparesim}
Note that our relation $\sim$, on account of 
condition (ii) above, % in Definition \ref{definitionsim},
is stricter than the one suggested by Gilbert \cite{gilbert}
for ordered groupoids.
\end{rem}

\begin{prop}\label{reflexive}
The relation $\sim$ is reflexive.
%a preorder.
%reflexive and transitive. 
\end{prop}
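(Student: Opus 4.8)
The plan is to show that every element $(g,x) \in \overline{X}$ satisfies $(g,x) \sim (g,x)$ by finding a witness for one of the two defining conditions. Since reflexivity only requires relating an element to itself, I would attempt to verify condition (i) directly, as it is the more flexible of the two and does not require $g$ to be an object.

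First I would take an arbitrary $(g,x) \in \overline{X}$, so that by definition $d(g) \cdot x$ is defined. The natural candidate for the mediating morphism $h$ in condition (i) is $h = d(g)$, the identity morphism at the domain object of $g$. I would then need to check the three requirements of (i) with $g' = g$ and $x' = x$: that $(g, d(g)) \in G^2$, that $d(g) \cdot x$ is defined, and that the equalities $g = g \cdot d(g)$ and $x = d(g) \cdot x$ hold. The composability $(g, d(g)) \in G^2$ holds because $d(g) = c(d(g))$ by the convention identifying an object with its identity morphism, and the categorical identity $g \cdot d(g) = g$ is immediate. That $d(g) \cdot x$ is defined is exactly the membership hypothesis $(g,x) \in \overline{X}$.

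The one nontrivial point, and the step I expect to be the main obstacle, is verifying the remaining equality $x = d(g) \cdot x$. This does not follow from the mere fact that $d(g) \cdot x$ is defined unless I invoke the action axioms. Here I would apply axiom (C1): since $d(g) \in {\rm ob}(G)$ and $d(g) \cdot x$ is defined, (C1) forces $d(g) \cdot x = x$. With this equality in hand, all the conditions of (i) are met with the witness $h = d(g)$, so $(g,x) \sim (g,x)$, and reflexivity follows. I would note in passing that condition (ii) only applies when $g \in {\rm ob}(G)$, which is why routing the proof through condition (i) with the identity witness is the cleaner and fully general approach.
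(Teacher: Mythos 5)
Your proof is correct and follows essentially the same route as the paper: both arguments verify condition (i) of Definition \ref{definitionsim} with the witness $h = d(g)$, using the membership $(g,x) \in \overline{X}$ to know $d(g)\cdot x$ is defined and axiom (C1) to conclude $d(g)\cdot x = x$, so that $(g,x) = (g\,d(g),x) \sim (g, d(g)\cdot x) = (g,x)$. Your explicit remark that condition (ii) would only cover the case $g \in {\rm ob}(G)$ is a sound observation, though the paper leaves it implicit.
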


\begin{proof}
%First we show that $\sim$ is reflexive.
Suppose that $(g,x) \in \overline{X}$.
Since % $x \in D_{d(g)}$, we know that 
$d(g) \cdot x$ is defined and, by (C1), equal to $x$, 
we get that $(g,x) = (gd(g),x) \sim (g , d(g) \cdot x) = (g,x)$.
\end{proof}

\begin{defi}
Define a partial set action by ${\rm mor}(G)$ 
on $\overline{X}$, in the sense of Definition \ref{setaction}, 
in the following way.
Given $g,h \in {\rm mor}(G)$ and $x \in X$
such that $(h,x) \in \overline{X}$,
then $g \cdot (h,x)$ is defined if and only if $(g,h) \in G^2$.
In that case we put $g \cdot (h,x) = (gh,x)$.
\end{defi}

\begin{prop}\label{twoterms}
If $(g , x),(g' , x') \in \overline{X}$ satisfy $(g,x) \sim (g',x')$,
then, for every $p \in {\rm mor}(G)$ with $(p,g),(p,g') \in G^2$,
we have that $(pg,x) \sim (pg',x')$. % holds.
\end{prop}

\begin{proof}
Case (i): there is $h \in {\rm mor}(G)$
such that $(g',h) \in G^2$, $h \cdot x$ is defined and 
the equalities $g = g' h$ and $x' = h \cdot x$ hold.
Since $(p,g) \in G^2$ we know that also $(p,g') \in G^2$ and
we can write $pg = p g' h$. 
Hence $(pg,x) \sim (pg',x')$. 

Case (ii): $x=x'$, $g,g' \in {\rm ob}(G)$
and both $g \cdot x$ and $g' \cdot x'$ are defined.
Since $(p,g),(p,g') \in G^2$, we get that $g=g'$.
Thus $(pg,x) = (pg',x')$ and hence, 
from Proposition \ref{reflexive}, we get that $(pg,x) \sim (pg',x')$.
\end{proof}

\begin{rem}\label{generated}
Recall that if $R$ is a reflexive relation on a set $S$, 
then the equivalence relation
generated by $R$ equals the set of all $(a,b) \in S \times S$
with the property that there is an integer $n \geq 2$
and $s_1,\ldots,s_n \in S$ such that $s_1 = a$, $s_n = b$,
and for every $i \in \{ 1,\ldots,n-1 \}$,
either $(s_i,s_{i+1}) \in R$ or $(s_{i+1},s_i) \in R$ holds
(see \cite[Proposition 2.15]{joshi}).
\end{rem}

\begin{defi}
Let $\simeq$ denote the equivalence relation on $\overline{X}$
generated by $\sim$ and put $Y = \overline{X} / \simeq$.
The equivalence class of $(g,x)$ in $Y$ is denoted $[g,x]$.
\end{defi}

\begin{prop}\label{isdefined}
Suppose that $(g,x),(g',x') \in \overline{X}$.
If $[g,x]=[g',x']$, then $g \cdot x$ is defined if and only if 
$g' \cdot x'$ is defined. In that case, $g \cdot x = g' \cdot x'$.
\end{prop}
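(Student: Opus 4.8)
The plan is to reduce the statement to a single step of the generating relation $\sim$ and then dispatch the two cases in its definition by the axioms (C1) and (C3). Throughout, write $P(g,x)$ for the assertion ``$g \cdot x$ is defined'' and, when $P(g,x)$ holds, let $v(g,x) = g \cdot x$. The content of the proposition is precisely that $P$ and $v$ descend to the quotient $Y = \overline{X}/\simeq$.

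First I would establish the \emph{single-step claim}: whenever $(a,y),(b,z) \in \overline{X}$ satisfy $(a,y) \sim (b,z)$, one has $P(a,y) \Leftrightarrow P(b,z)$, with $v(a,y) = v(b,z)$ when both are defined. Although $\sim$ itself need not be symmetric, note that this claim is symmetric in its two arguments. In case (i) of Definition \ref{definitionsim} there is $h \in {\rm mor}(G)$ with $(b,h) \in G^2$, $h \cdot y$ defined, $a = bh$ and $z = h \cdot y$; applying (C3) to the pair $(b,h)$ and the point $y$ (for which $h \cdot y$ is defined) gives exactly that $(bh) \cdot y$ is defined if and only if $b \cdot (h \cdot y)$ is defined, and that these agree when defined, i.e.\ $a \cdot y = b \cdot z$. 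In case (ii) we have $y = z$, $a,b \in {\rm ob}(G)$, and both $a \cdot y$ and $b \cdot z$ are defined by hypothesis; then (C1) forces $a \cdot y = y = z = b \cdot z$. This settles the single-step claim.

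Next I would upgrade this to the equivalence relation $\simeq$ generated by $\sim$. By Remark \ref{generated}, the hypothesis $[g,x] = [g',x']$ means there is a finite chain $s_1, \ldots, s_n$ in $\overline{X}$ with $s_1 = (g,x)$, $s_n = (g',x')$, and, for each $i$, either $s_i \sim s_{i+1}$ or $s_{i+1} \sim s_i$. Since the single-step claim is symmetric in its arguments, it applies to every consecutive pair regardless of the direction of the relation; hence $P(s_i) \Leftrightarrow P(s_{i+1})$ and $v(s_i) = v(s_{i+1})$ all along the chain. Transitivity of ``$\Leftrightarrow$'' and of equality then yields $P(g,x) \Leftrightarrow P(g',x')$ and $g \cdot x = g' \cdot x'$ when defined, which is what the proposition asserts.

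The argument presents no serious obstacle; the only point needing care is the reduction itself. One must observe that the quantity under study is a biconditional \emph{together with} an equality of values, and that a statement of this particular form is symmetric in its two arguments and composes transitively along a chain. This is exactly what neutralizes the asymmetry of $\sim$ in case (i): we never need to run the relation backwards, only to invoke the symmetric single-step claim at each link. Everything else is a direct appeal to (C1) and (C3).
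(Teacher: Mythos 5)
Your proof is correct and follows essentially the same route as the paper's: both reduce to a single step of $\sim$ via Remark \ref{generated}, handle case (i) by (C3) and case (ii) by (C1), and exploit the symmetry of the ``defined iff defined, and then equal'' statement to chain along the sequence (the paper phrases the chaining as an explicit induction on $n$, which is the same argument).
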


\begin{proof}
From Remark \ref{generated}, it follows that we have a sequence of transitions 
$$(g,x) = (g_1,x_1) \rightarrow (g_2,x_2) \rightarrow
\cdots \rightarrow (g_n,x_n) = (g',x'),$$
where, for each $i \in \{ 1,\ldots,n \}$, 
$(g_i,x_i) \in \overline{X}$, and for every 
$i \in \{ 1,\ldots,n-1 \}$ either
$(g_i,x_i) \sim (g_{i+1},x_{i+1})$ or
$(g_{i+1},x_{i+1}) \sim (g_i,x_i)$.
Using induction over $n$, we prove that
$g \cdot x$ is defined if and only if $g' \cdot x'$ is defined,
and, in that case $g \cdot x = g' \cdot x'$.

Base case: $n=2$. Then $(g,x) \sim (g',x')$ or
$(g',x') \sim (g,x)$. By symmetry it is enough 
to treat the case $(g,x) \sim (g',x')$.
%Case 1: $(g,x) \sim (g',x')$.
Subcase (i): there is $h \in {\rm mor}(G)$ such that
$(g',h) \in G^2$, $h \cdot x$ is defined,
$g = g' h$ and $x' = h \cdot x$.
By (C3), we get that
$g \cdot x = (g'h) \cdot x$ is defined if and only if
$g' \cdot (h \cdot x) = g' \cdot x'$ is defined,
and, in that case, $g \cdot x = (g'h) \cdot x =
g' \cdot (h \cdot x) = g' \cdot x'$. 
Subcase (ii): $x=x'$, $g,g' \in {\rm ob}(G)$
and both $g \cdot x$ and $g' \cdot x'$ are defined.
From (C1) and (C2) it follows that 
$g \cdot x = x = x' = g' \cdot x'$.

Induction step: suppose that $n > 2$ and that the claim holds for 
all $m < n$.
By the induction hypothesis and the base case, we get that
$g \cdot x$ is defined if and only if $g_{n-1} \cdot x_{n-1}$ 
is defined if and only if $g' \cdot x'$ is defined,
and, in that case, $g \cdot x = g_{n-1} \cdot x_{n-1} = g' \cdot x'$.
\end{proof}

\begin{defi}\label{definitionglobalaction}
Define a set action by ${\rm mor}(G)$ on $Y$ in the following way.
Take $g \in {\rm mor}(G)$ and $(h,x) \in \overline{X}$.
Then $g \cdot [h,x]$ is defined if and only if 
there is $(h',x') \in \overline{X}$ such that
$(h,x) \simeq (h',x')$ and $(g,h') \in G^2$.
In that case, we put $g \cdot [h,x] = [gh',x']$. 
\end{defi}

\begin{prop}
This is a well defined global category action by $G$ on $Y$.
\end{prop}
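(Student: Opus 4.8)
The plan is to verify the four items implicit in the statement: (a) the action is well defined, i.e.\ the value $g \cdot [h,x] = [gh',x']$ does not depend on the choice of representative $(h',x')$; (b) the axioms (C1)--(C3) hold, so that we have a genuine partial category action; (c) the axiom (C4) holds, so that the action is global; and (d) along the way, that whenever $g \cdot [h,x]$ is defined the element $[gh',x']$ genuinely lies in $Y$, i.e.\ $(gh',x') \in \overline{X}$. For this last point I would note that from $(g,h') \in G^2$ and $(h',x') \in \overline{X}$ (so $d(h') \cdot x'$ is defined) one gets $d(gh') = d(h')$, whence $d(gh') \cdot x'$ is defined and $(gh',x') \in \overline{X}$.

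The hard part will be the well-definedness in (a), and I expect this to be the main obstacle. Suppose $(h,x) \simeq (h',x')$ with $(g,h') \in G^2$, and also $(h,x) \simeq (h'',x'')$ with $(g,h'') \in G^2$; I must show $[gh',x'] = [gh'',x'']$, i.e.\ $(gh',x') \simeq (gh'',x'')$. By transitivity it suffices to treat $(h',x') \simeq (h'',x'')$. Using Remark~\ref{generated}, I would reduce to a single generating transition $(h',x') \sim (h'',x'')$ (or its reverse) and then invoke Proposition~\ref{twoterms} with $p = g$: from $(g,h'),(g,h'') \in G^2$ and $(h',x') \sim (h'',x'')$ that proposition yields $(gh',x') \sim (gh'',x'')$, hence $\simeq$. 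The only subtlety is propagating the hypothesis $(g,h') \in G^2$ along a chain of transitions so that Proposition~\ref{twoterms} applies at each step; here I would use that any transition $(s_i,y_i) \sim (s_{i+1},y_{i+1})$ (in either case (i) or (ii) of Definition~\ref{definitionsim}) preserves the codomain $c(s_i) = c(s_{i+1})$, so $(g,s_i) \in G^2$ forces $(g,s_{i+1}) \in G^2$, keeping the induction alive.

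With well-definedness secured, the verification of the axioms is routine. For (C1), given $[h,x] \in Y$ pick any $e \in {\rm ob}(G)$ with $e \cdot x$ defined (which exists by (C1) for $X$); then $c(h) \cdot [h,x]$ is defined and equals $[c(h)\,h, x] = [h,x]$, while if $f \in {\rm ob}(G)$ satisfies $f \cdot [h,x]$ defined then necessarily $f = c(h')$ for a representative, giving $[fh',x'] = [h',x'] = [h,x]$. For (C2), if $g \cdot [h,x]$ is defined via $(g,h') \in G^2$ then $d(g) = c(h')$ and $d(g) \cdot [h,x] = [h',x'] = [h,x]$ is defined. For (C3), given $(g,k) \in G^2$ with $k \cdot [h,x]$ defined, unwinding the definitions shows both $(gk)\cdot[h,x]$ and $g \cdot (k \cdot [h,x])$ reduce to $[gkh',x']$ for a common representative, and the composability conditions match because $(g,k),(k,\cdot) \in G^2$ compose. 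Throughout I would lean on Proposition~\ref{isdefined} to pass freely between representatives of a class.

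Finally, for globality (C4): suppose $d(g) \cdot [h,x]$ is defined. Then there is $(h',x') \simeq (h,x)$ with $(d(g),h') \in G^2$, i.e.\ $c(h') = d(g)$. Since $d(g) = c(g)$... more precisely $d(g) = d(g)$ and $(g,h') \in G^2$ holds because $d(g) = c(h')$ means $g$ and $h'$ are composable, so $g \cdot [h,x]$ is defined by the very definition. This is the one place where the structure of $\overline{X}$ as pairs $(g,x)$ pays off: every class has a representative whose morphism can be pre-composed, so reachability of $d(g)$ automatically yields reachability of $g$. I would present these four verifications in the order (a), (d), then (C1)--(C3), then (C4), since each later step quietly reuses the representative-independence established first.
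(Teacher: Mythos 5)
Your skeleton agrees with the paper's (reduce everything to well-definedness, expand $[h,x]=[h',x']$ into a chain of transitions via Remark~\ref{generated}, induct, and apply Proposition~\ref{twoterms} link by link), and your treatment of (C1)--(C4) and of membership in $\overline{X}$ is fine. But the lemma you invoke to keep the induction alive is false: transitions of type (ii) in Definition~\ref{definitionsim} do \emph{not} preserve codomains. In case (ii) the two morphisms are objects $e,f$ with $e \cdot x$ and $f \cdot x$ both defined, and for an object one has $c(e)=e$, $c(f)=f$; so $(e,x) \sim (f,x)$ can hold with $c(e) \neq c(f)$. Concretely, in Example~\ref{example1} one has $(e,2)\sim(f,2)$ while $c(e)=e\neq f=c(f)$. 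The failure is fatal for step-by-step propagation: if $(g,s_i)\in G^2$ and $s_i$ is an object, then $s_i=d(g)$, so after a type (ii) transition to a \emph{different} object composability with $g$ necessarily fails, even though it may recover further along the chain (the class $[e,2]$ in Example~\ref{example4} contains $(e,2)$ and $(g^{-1},3)$, both with codomain $e$, but also $(f,2)$, which is not composable with $g$). Since Remark~\ref{generated} only guarantees the existence of \emph{some} chain, your argument must handle whatever chain it is given, and it breaks at the first type (ii) link between distinct objects.

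The missing idea is the case split the paper performs in the induction step, which is designed precisely around this obstruction. If $h' \in {\rm ob}(G)$, then $h'=d(g)=c(h)$ and $h'\cdot x'$ is defined, so Proposition~\ref{isdefined} --- a statement about the whole equivalence class, not about individual transitions --- yields that $h \cdot x$ is defined and equals $x'$; hence $(h,x)\sim(c(h),h\cdot x)=(h',x')$ is a \emph{direct} type (i) transition, the given chain is bypassed entirely, and a single application of Proposition~\ref{twoterms} finishes. If instead $h' \notin {\rm ob}(G)$, then the last link of the chain cannot be of type (ii) (which requires both morphisms to be objects), so it is of type (i), and type (i) \emph{does} preserve codomains; thus $(g,h_{n-1})\in G^2$, the induction hypothesis applies to the truncated chain from $(h,x)$ to $(h_{n-1},x_{n-1})$, and the base case handles the last link. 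Without this dichotomy (or some equivalent use of Proposition~\ref{isdefined} to neutralize type (ii) transitions), the well-definedness argument does not go through.
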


\begin{proof}
It is clear that (C1)-(C4) of Definition \ref{definition}
hold once we have shown that the action is well defined.
To this end, suppose that $g,h,h' \in {\rm mor}(G)$
and $x,x' \in X$ are chosen so that 
$(h,x),(h',x') \in \overline{X}$,
$(g,h),(g,h') \in G^2$ and
$[h,x] = [h',x']$.
We wish to show that $[gh,x]=[gh',x']$.
From Remark \ref{generated}, it follows that 
we have a sequence of transitions 
$$(h,x) = (h_1,x_1) \rightarrow (h_2,x_2) \rightarrow
\cdots \rightarrow (h_n,x_n) = (h',x'),$$
where, for each $i \in \{ 1,\ldots,n \}$, 
$(h_i,x_i) \in \overline{X}$, and for every 
$i \in \{ 1,\ldots,n-1 \}$ either
$(h_i,x_i) \sim (h_{i+1},x_{i+1})$ or
$(h_{i+1},x_{i+1}) \sim (h_i,x_i)$.

We prove that $[gh,x]=[gh',x']$ by induction over $n$.

Base case: $n=2$.
Then $(h,x) \sim (h',x')$ or 
$(h',x') \sim (h,x)$.
From Proposition \ref{twoterms} it follows that
$(gh,x) \sim (gh',x')$ or $(gh',x') \sim (gh,x)$.
In either case we get that 
$[gh,x]=[gh',x']$.

Induction step: 
suppose that $n > 2$ and that the claim holds for 
all $m < n$.

Case 1: $h' \in {\rm ob}(G)$.
Then $h' = d(g) = c(h)$ and $h' \cdot x'$ is defined.
From Proposition \ref{isdefined} it follows that
$h \cdot x$ is defined and equal to $x'$.
Thus $(h,x) \sim (c(h) , h \cdot x) = (h' , x')$.
From Proposition \ref{twoterms} we get that
$(gh,x) \sim (gh',x')$. Thus $[gh,x]=[gh',x']$.

Case 2: $h' \notin {\rm ob}(G)$.
%Since $h' \notin {\rm ob}(G)$, we get, 
From Definition \ref{definitionsim}(i), we get that 
$c(h_{n-1}) = c(h_n) = c(h') = d(g)$.
Thus, by the induction hypothesis, we get that
$[gh,x] = [g h_{n-1} , x_{n-1}]$.
From the base case, we get that 
$[g h_{n-1} , x_{n-1}] = [gh',x']$.
Thus $[gh,x] = [gh',x']$.
\end{proof}

\subsection*{Proof of Theorem \ref{maintheorem}}
We wish to show that $Y$ is a universal globalization of $X$.
To this end, define a function $i : X \rightarrow Y$
in the following way.
Take $x \in X$. From (C1) it follows that we can choose
$e \in {\rm ob}(G)$ such that $e \cdot x$ is defined.
Put $i(x) = [e , x]$.

First we show that $i$ is well defined.
Suppose that $e,f \in {\rm ob}(G)$ have the property that
both $e \cdot x$ and $f \cdot x$ are defined.
From the definition of $\sim$ it follows that
$(e,x) \sim (f,x)$. 
Thus $[e,x] = [f,x]$.

Now we show that $i$ is a $G$-function.
Take $x \in X$ and $g \in {\rm mor}(G)$ such that $g \cdot x$ is defined.
First we show that $g \cdot i(x)$ is defined.
Since $g \cdot x$ is defined we get that $d(g) \cdot x$ is defined.
Since $(g , d(g)) \in G^2$, we get that 
$g \cdot [d(g) , x]$ is defined i.e. that
$g \cdot i(x)$ is defined.
Next we show that $i(g \cdot x) = g \cdot i(x)$.
Since $g \cdot x$ is defined and $(c(g),g) \in G^2$,
it follows from (C3) that $c(g) \cdot (g \cdot x)$ is 
defined and equal to $g \cdot x$.
Hence $i(g \cdot x) = [c(g) , g \cdot x]$. 
From the definition of $\sim$ we get that
$(g,x) \sim (c(g) , g \cdot x)$.
Thus $[c(g) , g \cdot x] = [g,x] = g \cdot [c(g) , x] = g \cdot i(x)$.

Now we show that $Y$ is universal.
Suppose that $Z$ is another globalization 
of the partial action by $G$ on $X$.
Suppose that $j : X \rightarrow Z$ is a $G$-function.
Define a map $k : Y \rightarrow Z$ in the following way.
Suppose that $(g,x) \in \overline{X}$. 
Put $k( [g,x] ) = g \cdot j(x)$.
First we show that $g \cdot j(x)$ is defined.
From the definition of $\overline{X}$, we get that
$d(g) \cdot x$ is defined.
Since $j$ is a $G$-function, this implies that 
$d(g) \cdot j(x)$ is defined.
Since $Z$ is global, we get, by (C4),
that $g \cdot j(x)$ is defined.
Now we show that $k$ is well defined.
Suppose that $[g,x] = [g',x']$ for some
$(g,x),(g',x') \in \overline{X}$.
From Remark \ref{generated}, it follows that there is
a sequence of transitions 
$$(g,x) = (g_1,x_1) \rightarrow (g_2,x_2) \rightarrow
\cdots \rightarrow (g_n,x_n) = (g',x'),$$
where, for each $i \in \{ 1,\ldots,n \}$, 
$(g_i,x_i) \in \overline{X}$, and for every 
$i \in \{ 1,\ldots,n-1 \}$ either
$(g_i,x_i) \sim (g_{i+1},x_{i+1})$ or
$(g_{i+1},x_{i+1}) \sim (g_i,x_i)$.
Using induction over $n$, we prove that
$k( [g,x] ) = k( [g',x'] )$.

Base case: $n=2$.
Then either $(g,x) \sim (g',x')$ or $(g',x') \sim (g,x)$.
By symmetry, it is enough to treat the
case $(g,x) \sim (g',x')$.
Subcase (i): there is $h \in {\rm mor}(G)$ such that
$(g',h) \in G^2$, $h \cdot x$ is defined,
$g = g' h$ and $x' = h \cdot x$.
But since $j$ is a $G$-function, we get that
$h \cdot j(x)$ is defined and equal to $j(h \cdot x)$.
Hence, by (C3), we get that 
$k([g,x]) = g \cdot j(x) = (g'h) \cdot j(x) =
g' \cdot (h \cdot j(x)) = g' \cdot j(h \cdot x) = 
g' \cdot j(x') = k([g',x'])$.
Subcase (ii): $x=x'$, $g,g' \in {\rm ob}(G)$
and both $g \cdot x$ and $g' \cdot x'$ are defined.
From (C1) and (C2) it follows that 
$g \cdot x = x = x' = g' \cdot x'$.
Thus $k( [g,x] ) = g \cdot j(x) = j( g \cdot x) = j(x) = j(x') = 
j(g' \cdot x') = g' \cdot j(x') = k( [g',x'] )$.

Induction step: 
suppose that $n > 2$ and that the claim holds for 
all $m < n$.
From the induction hypothesis and the base case,
it follows that $k( [g,x] ) = k( [g_{n-1},x_{n-1}] ) = k([g',x'])$.

Now we show that $k$ is a $G$-function.
Take $(h,x) \in \overline{X}$ and $g \in {\rm mor}(G)$
such that $(g,h) \in G^2$.
Then, since $h \cdot j(x)$ is defined, we get, from (C3), that 
$k( g \cdot [h,x] ) = k( [gh,x] ) =
(gh) \cdot j(x) = g \cdot (h \cdot j(x)) = 
g \cdot ( j( [h,x] ) ) = g \cdot k( [h,x] )$.

Next, we show that $j = k \circ i$. Take $x \in X$
and $e \in {\rm ob}(G)$ such that $e \cdot x$ is defined.
Then, since $j$ is a $G$-function and $e \cdot x$ is defined,
and equal to $x$, we get that 
$(k \circ i)(x) = k( [e,x] ) = e \cdot j(x) =
j( e \cdot x ) = j(x)$.

Finally, we show that $k$ is unique.
Suppose that $k' : Y \rightarrow Z$ is another
$G$-function such that $j = k' \circ i$.
Take $(g,x) \in \overline{X}$. 
Thus, since $k'$ is a $G$-function, we get that
$k'( [g,x] ) = k'( g \cdot [d(g),x] ) =
g \cdot k'( [d(g),x] ) = g \cdot (k' \circ i)(x) =
g \cdot j(x) = k( [g,x] )$.
\hfill $\square$

\begin{rem}\label{injectivecategory}
The map $i : X \rightarrow Y$ from the proof
of Theorem \ref{maintheorem} above is, in fact, {\it injective}
and the global category action by $G$ on $i(X)$ 
induces the original partial category action by $G$ on $X$. 
Indeed, take $x,x' \in X$ and $e,e' \in {\rm ob}(G)$
such that $e \cdot x$ and $e' \cdot x'$ are defined.
Suppose that $i(x)=i(x')$ i.e.
that $[e,x]=[e',x']$.
Since both $e \cdot x$ and $e' \cdot x'$ are defined,
then, from Proposition \ref{isdefined}, we can conclude
that $x = e \cdot x = e' \cdot x' = x'$.
Thus $i$ is injective.
Now take $y,z \in X$ and $g \in {\rm mor}(G)$
such that $g \cdot i(y)$ is defined and equal to $i(z)$.
We wish to show that $g \cdot y$ is defined and equal to $x$.
Take $p,q \in {\rm ob}(G)$ such that $p \cdot y$ 
and $q \cdot z$ are defined. Since $g \cdot i(y)$
is defined, there is, by Definition \ref{definitionglobalaction},
$y' \in X$ and $h \in {\rm mor}(G)$ such that $(g,h) \in G^2$,
$d(h) \cdot y'$ is defined and $i(y) = [p,y] = [h,y']$.
Then we get that $[gh,y'] = g \cdot [h,y'] = g \cdot [p,y] = 
g \cdot i(y) = i(z) = [q , z]$.
But since $q \cdot z$ is defined, we get, from
Proposition \ref{isdefined}, that $(gh) \cdot y'$ also
is defined and equal to $q \cdot z = z$.
However, since $p \cdot y$ is defined, we get, from
Proposition \ref{isdefined}, that $h \cdot y'$
is defined and equal to $p \cdot y = y$.
From (C3), we get that $g \cdot (h \cdot y')$
is also defined and equal to $(gh) \cdot y' = z$.
Thus $g \cdot y = g \cdot (h \cdot y')$ is also defined
and equal to $z$. 
\end{rem}

Now we calculate the universal globalization of $X$, in two cases,
when $G$ is the smallest non-discrete category with two objects.

\begin{exa}\label{example1}
Let $G$ be the category having ${\rm ob}(G) = \{ e , f \}$
and ${\rm mor}(G) = \{ e,f,g \}$ where $g : e \rightarrow f$.
Let the set $X = \{ 1,2,3 \}$ be equipped with a partial 
category action by $G$ defined by the relations
$e \cdot 1 = 1$, $e \cdot 2 = 2$, $f \cdot 2 = 2$, 
$f \cdot 3 = 3$ and $g \cdot 2 = 2.$
Then it is clear that 
$X_e = \{ 1,2 \}$,  $X_f = \{ 2,3 \}$ and $X_g = {}_g X = \{ 2 \}.$
Also 
$$\overline{X} = \{ (e,1) , (e,2) , (f,2) , (f,3) , (g,1) , (g,2) \}.$$
However since
$(e,2) \sim (f,2)$ and $(g,2) = (fg,2) \sim (f, g\cdot 2) = (f,2)$,
we get that 
$$Y = \{ [e,1],[e,2],[f,3],[g,1] \}.$$
The global category action by $G$ on $Y$ is given by
$e \cdot [e,1] = [e,1]$, 
$e \cdot [e,2] = [e,2]$,
$g \cdot [e,1] = [g,1]$,
$g \cdot [e,2] = [e,2]$,
$f \cdot [f,3] = [f,3]$ and 
$f \cdot [g,1] = [g,1]$.
The injective $G$-function $i : X \rightarrow Y$ is defined by
$i(1) = [e,1]$, 
$i(2) = [e,2]$ and
$i(3) = [f,3].$
\end{exa}

Note that, in the previous example, $G$ can be embedded
in a groupoid (see Example \ref{example2}).
On account of Proposition \ref{bijectivesubset}
(see Section \ref{sectionpartialgroupoidactions})
this is not possible for the category in the next example.

\begin{exa}\label{example3}
Let $G$ be the category from Example \ref{example1}.
Let the set $X = \{ 1,2,3,4 \}$ be equipped with a partial 
category action by $G$ defined by the relations
$e \cdot 1 = 1$,
$e \cdot 2 = 2$, 
$e \cdot 3 = 3$,  
$f \cdot 2 = 2$, 
$f \cdot 3 = 3$,
$f \cdot 4 = 4$ and
$g \cdot 2 = g \cdot 3 = 2.$
Then it is clear that 
$X_e = \{ 1,2,3 \}$,
$X_f = \{ 2,3,4 \}$,
$X_g = \{ 2,3 \}$ and 
${}_g X = \{ 2 \}.$
Also 
$$\overline{X} = \{ (e,1) , (e,2) , (e,3) , (f,2) , (f,3) ,
(f,4) , (g,1) , (g,2) , (g,3) \}.$$
However since
$(g,3) = (fg,3) \sim (f,g \cdot 3) = (f,2) \sim (e,2)$,
$(e,3) \sim (f,3)$ and 
$(g,2) = (fg , 2) \sim (f, g \cdot 2) = (f , 2),$
we get that 
$$Y = \{ [e,1] , [e,2] , [e,3] , [f,4] , [g,1] \}.$$
The global category action by $G$ on $Y$ is given by
$e \cdot [e,1] = [e,1]$,
$e \cdot [e,2] = [e,2]$,
$e \cdot [e,3] = [e,3]$,
$f \cdot [e,2] = [e,2]$,
$f \cdot [e,3] = [e,3]$,  
$f \cdot [f,4] = [f,4]$,
$f \cdot [g,1] = [g,1]$,
$g \cdot [e,1] = [g,1]$,
$g \cdot [e,2] = [e,2]$ and
$g \cdot [e,3]=[e,2]$.
The injective $G$-function $i : X \rightarrow Y$ is defined by
$i(1) = [e,1]$,
$i(2) = [e,2]$,
$i(3) = [e,3]$ and 
$i(4) = [f,4].$
\end{exa}

\section{Partial Groupoid Actions}\label{sectionpartialgroupoidactions}

In this section,
we state our conventions on groupoids
and we define partial groupoid actions on sets
(see Definition \ref{definitiongroupoid}).
Then we show that the the partial category actions of a groupoid are precisely
its partial groupoid actions
(see Proposition \ref{categorygroupoid}).
Thereby we generalize \cite[Proposition 2.4]{megrelishvili}.
After that, we show that the definition of partial
groupoid actions can be reformulated in terms
of functions on subsets analogously with 
the axioms (G1$'$)-(G3$'$) from the group case
(see Remark \ref{bijectivesubset}).
Then we generalize \cite[Theorem 3.4]{kellendonk} and show
that mediating $G$-functions between universal globalizations
always are injective (see Proposition \ref{sharpening}).
At the end of the section, we compare our definition with the one
given by Gilbert \cite{gilbert} for partial actions of 
ordered groupoids on sets (see Remark \ref{comparegilbert}).

\subsection*{Conventions on groupoids}
Suppose that $G$ is a category.
Recall that $g \in {\rm mor}(G)$ is called
an {\it isomorphism} if there is $h \in {\rm mor}(G)$,
such that $(g,h) \in G^2$, $gh = d(h)$ and $hg = d(g)$.
In that case we put $g^{-1} = h$.
Recall that $G$ is called a {\it groupoid} if all its 
morphisms are isomorphisms.

\begin{defi}\label{definitiongroupoid}
Suppose that $X$ is a set and $G$ is a groupoid.
By a {\it partial groupoid action by $G$ on $X$}, 
we mean a partial set action by ${\rm mor}(G)$ on $X$,
in the sense of Definition \ref{setaction},
satisfying the following three axioms.
\begin{itemize}

\item[(GR1)] For every $x \in X$, there is $e \in {\rm ob}(G)$
such that $e \cdot x$ is defined. 
If $f \in {\rm ob}(G)$ and $x \in X$ are chosen so that
$f \cdot x$ is defined, then $f \cdot x = x$.

\item[(GR2)] If $x \in X$ and $g \in {\rm mor}(G)$
are chosen so that $g \cdot x$ is defined, then
$g^{-1} \cdot (g \cdot x)$ is defined and equal to $x$.

\item[(GR3)] Suppose that $(g,h) \in G^2$ and $x \in X$ are chosen so that
$g \cdot (h \cdot x)$ is defined, then $(gh) \cdot x$
is defined and equal to $g \cdot (h \cdot x)$.

\end{itemize}
We say that such an action by $G$ on $X$ is {\it global}
if the following axiom holds.
\begin{itemize}

\item[(GR4)] If $g \in {\rm mor}(G)$ and $x \in X$
are chosen so that $d(g) \cdot x$ is defined,
then $g \cdot x$ is defined.

\end{itemize}
\end{defi}

\begin{prop}\label{categorygroupoid}
The partial category actions of a groupoid are precisely
its partial groupoid actions.
\end{prop}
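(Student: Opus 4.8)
The plan is to prove the two inclusions separately, after first disposing of the parts that require no work. Observe that axiom (C1) and axiom (GR1) are literally the same statement, and likewise the globality axioms (C4) and (GR4) coincide verbatim; hence globality is preserved in both directions automatically, and it suffices to show that, in the presence of (C1), the pair of axioms (C2) and (C3) is equivalent to the pair (GR2) and (GR3). I will use throughout the elementary groupoid identities $g^{-1}g = d(g)$, $hh^{-1} = c(h)$ and, for $(g,h)\in G^2$, the relation $(gh)h^{-1}=g$, all of which follow from the conventions on isomorphisms.

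For the implication from (C1)--(C3) to the groupoid axioms, I would first derive (GR3): if $(g,h)\in G^2$ and $g\cdot(h\cdot x)$ is defined, then in particular $h\cdot x$ is defined, so (C3) applies and immediately yields that $(gh)\cdot x$ is defined and equal to $g\cdot(h\cdot x)$. To obtain (GR2), suppose $g\cdot x$ is defined. By (C2), $d(g)\cdot x$ is defined, and since $g^{-1}g = d(g)$ this says $(g^{-1}g)\cdot x$ is defined. Applying (C3) to the composable pair $(g^{-1},g)$ and the element $x$ (legitimate because $g\cdot x$ is defined) shows that $g^{-1}\cdot(g\cdot x)$ is defined and equal to $(g^{-1}g)\cdot x = d(g)\cdot x$, which by (C1) equals $x$. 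This is exactly (GR2).

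For the converse, assume (GR1)--(GR3). Axiom (C2) follows quickly: if $g\cdot x$ is defined, then (GR2) makes $g^{-1}\cdot(g\cdot x)$ defined, and applying (GR3) to the pair $(g^{-1},g)$ shows that $(g^{-1}g)\cdot x = d(g)\cdot x$ is defined. It remains to establish (C3), and this is where the only real work lies. So fix $(g,h)\in G^2$ and $x$ with $h\cdot x$ defined. The reverse implication of (C3), that definedness of $g\cdot(h\cdot x)$ forces definedness and equality of $(gh)\cdot x$, is precisely (GR3) and needs nothing further.

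The main obstacle is the forward implication: from $(gh)\cdot x$ being defined I must produce $g\cdot(h\cdot x)$ and prove the equality, and the groupoid axioms only give definedness of a composite from definedness of a two-step action, not conversely. The trick I would use is to peel off $h$ by its inverse. Set $y = h\cdot x$, which is defined by hypothesis; by (GR2), $h^{-1}\cdot y = h^{-1}\cdot(h\cdot x)$ is defined and equals $x$. Now apply (GR3) to the composable pair $(gh,h^{-1})$ and the element $y$: the two-step action $(gh)\cdot(h^{-1}\cdot y) = (gh)\cdot x$ is defined by assumption, so (GR3) yields that $\bigl((gh)h^{-1}\bigr)\cdot y = g\cdot y$ is defined and equal to $(gh)\cdot x$. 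Since $g\cdot y = g\cdot(h\cdot x)$, this delivers both the definedness and the equality required by (C3), completing the proof.
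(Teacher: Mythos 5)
Your proposal is correct and follows essentially the same route as the paper: both directions reduce to the same manipulations, including the key step of applying (GR3) to the composable pair $(gh,h^{-1})$ acting on $h\cdot x$ to recover the forward implication of (C3). The only (harmless) difference is that you cite (C1) and (C2) separately where the paper compresses them, which is if anything slightly more careful.
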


\begin{proof}
Suppose that $G$ is a groupoid and $X$ is a set
equipped with a partial set action by ${\rm mor}(G)$.
We have to show that (C1)-(C3) hold if and only if (GR1)-(GR3) hold. 

Suppose that (C1)-(C3) hold.
The condition (GR1) is identical to (C1).
Now we show (GR2). Suppose that $x \in X$ and 
$g \in {\rm mor}(G)$ are chosen so that $g \cdot x$ is defined.
From (C2), we get that $d(g) \cdot x$ is defined
and equal to $x$.
But then $(g^{-1} g) \cdot x$ is also defined.
Since $(g^{-1},g) \in G^2$, we thus get, from (C3), that
$g^{-1} \cdot (g \cdot x)$ is defined and equal to
$(g^{-1} g) \cdot x = d(g) \cdot x = x$.
Now we show (GR3).
Suppose that $(g,h) \in G^2$ and $x \in X$ are chosen so that
$g \cdot (h \cdot x)$ is defined.
Then $h \cdot x$ is defined.
Thus, from (C3), we get that $(gh) \cdot x$ also is 
defined and equal to $g \cdot (h \cdot x)$.

Now suppose that (GR1)-(GR3) hold.
The condition (C1) is identical to (GR1).
Now we show (C2).
Take $x \in X$ and $g \in {\rm mor}(G)$
so that $g \cdot x$ is defined. 
From (GR2), we get that $g^{-1} \cdot (g \cdot x)$
is defined and equal to $x$.
From (GR3), we get that $(g^{-1} g) \cdot x$ also is defined
and equal to $g^{-1} \cdot (g \cdot x)$.
Putting this together shows that 
$d(g) \cdot x$ is defined and equal to $x$.
Now we show (C3).
Suppose that $(g,h) \in G^2$ and $x \in X$ are chosen so that
$h \cdot x$ is defined. 
Suppose first that $g \cdot (h \cdot x)$ is defined.
Then, from (GR3), it follows that $(gh) \cdot x$
is defined and equal to $g \cdot (h \cdot x)$.
Now suppose that $(gh) \cdot x$ is defined.
We need to show that $g \cdot (h \cdot x)$ is defined.
From (GR2), it follows that $h^{-1} \cdot (h \cdot x)$
is defined and equal to $x$.
Thus $(gh) \cdot (h^{-1} \cdot (h \cdot x))$ is defined.
From (GR3), it follows that 
$(g h h^{-1}) \cdot (h \cdot x)$
is defined. But since $ghh^{-1} = g$, 
we get that $g \cdot (h \cdot x)$ is defined.
\end{proof}

\begin{prop}\label{inverse}
If $G$ is a groupoid and $X$ is a set equipped
with a partial groupoid action by $G$, then,
using the notation from Section \ref{sectionpartialcategoryactions},
we get that for each $g \in {\rm mor}(G)$,
the function $\alpha_g$ is bijective with
$(\alpha_g)^{-1} = \alpha_{g^{-1}}$.
\end{prop}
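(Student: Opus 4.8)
The plan is to make axiom (GR2) the workhorse, since it already hands us the candidate inverse: whenever $g \cdot x$ is defined, $g^{-1} \cdot (g \cdot x)$ is defined and returns $x$. Before composing $\alpha_g$ with $\alpha_{g^{-1}}$, however, I would first pin down the domains by establishing the key equality ${}_g X = X_{g^{-1}}$. The inclusion ${}_g X \subseteq X_{g^{-1}}$ is immediate from (GR2): any $y \in {}_g X$ has the form $y = g \cdot x$ with $x \in X_g$, and (GR2) says $g^{-1} \cdot y$ is then defined, so $y \in X_{g^{-1}}$. For the reverse inclusion, I would apply (GR2) to the morphism $g^{-1}$, using $(g^{-1})^{-1} = g$: if $y \in X_{g^{-1}}$, then $g \cdot (g^{-1} \cdot y)$ is defined and equal to $y$, which exhibits $y$ as an element of ${}_g X$. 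Replacing $g$ by $g^{-1}$ throughout yields the symmetric equality ${}_{g^{-1}} X = X_g$.

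With the domains matched, $\alpha_{g^{-1}}$ becomes a function ${}_g X = X_{g^{-1}} \rightarrow {}_{g^{-1}} X = X_g$, so it has exactly the right type to be a two-sided inverse of $\alpha_g : X_g \rightarrow {}_g X$. I would then verify the two composites directly. For $x \in X_g$, axiom (GR2) gives $\alpha_{g^{-1}}(\alpha_g(x)) = g^{-1} \cdot (g \cdot x) = x$, so $\alpha_{g^{-1}} \circ \alpha_g = {\rm id}_{X_g}$. Invoking the same symmetry $g \leftrightarrow g^{-1}$ used above, the identical computation applied to $g^{-1}$ and a defined element $g^{-1} \cdot y$ yields $\alpha_g(\alpha_{g^{-1}}(y)) = g \cdot (g^{-1} \cdot y) = y$ for $y \in {}_g X$, so $\alpha_g \circ \alpha_{g^{-1}} = {\rm id}_{{}_g X}$. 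Hence $\alpha_g$ is a bijection with $(\alpha_g)^{-1} = \alpha_{g^{-1}}$.

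The two composite identities are routine once (GR2) is in hand; the only point genuinely demanding care is the domain bookkeeping, i.e. the equality ${}_g X = X_{g^{-1}}$. This is what guarantees that $\alpha_{g^{-1}}$ is defined on precisely the image of $\alpha_g$, so that it functions as a genuine two-sided inverse rather than merely a one-sided inverse defined on the wrong set. Throughout, I would freely use Proposition \ref{categorygroupoid}, which permits me to invoke (GR1)--(GR3) for the given partial category action of the groupoid $G$.
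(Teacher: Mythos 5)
Your proof is correct and follows essentially the same route as the paper's: both arguments rest entirely on (GR2), applied once to $g$ and once to $g^{-1}$ via $(g^{-1})^{-1}=g$, with the domain equality ${}_g X = X_{g^{-1}}$ as the crucial bookkeeping step. The only difference is organizational -- the paper gets surjectivity for free from the definition of ${}_g X$ and deduces injectivity from the left-inverse identity, whereas you verify both composite identities directly -- which amounts to the same computation.
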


\begin{proof}
Take $g \in {\rm mor}(G)$.
By the definition of the function 
$\alpha_g$ it follows that it is surjective.
Now we show that $\alpha_g$ is injective.
From (GR2) it follows that 
${}_g X \subseteq X_{g^{-1}}$ and that 
$\alpha_{g^{-1}} ( \alpha_g (x) ) = x$, for $x \in X_g$.
Thus $\alpha_g$ is also injective.
What remains to show is that ${}_g X \supseteq X_{g^{-1}}$.
Take $x \in X_{g^{-1}}$.
Then, since ${}_{g^{-1}} X \subseteq X_g$, by (GR2),
we get that $x = \alpha_g ( \alpha_{g^{-1}} (x) ) \in {}_g X$.
\end{proof}

\begin{rem}\label{bijectivesubset}
It is clear from Proposition \ref{alternative},
Proposition \ref{alternative'} and Proposition \ref{inverse}
that Definition \ref{definitiongroupoid} can be 
reformulated in terms of functions defined on subsets 
of $X$ in the following way.
A partial groupoid action by $G$ on $X$ is a pair
$( \{ X_g \}_{g \in {\rm mor}(G)} , \{ \alpha_g \}_{g \in {\rm mor}(G)} )$,
where for each $g \in {\rm mor}(G)$, $X_g$ is a subset of $X$ and
$\alpha_g : X_g \rightarrow X_{g^{-1}}$ is a bijection
satisfying the following three axioms.

\begin{itemize}

\item[(GR1$'$)] $X = \cup_{e \in {\rm ob}(G)} X_e$ and
$\alpha_e = {\rm id}_{X_e}$, for $e \in {\rm ob}(G)$.

\item[(GR2$'$)] If $g \in {\rm mor}(G)$, then $X_g \subseteq X_{d(g)}$.

\item[(GR3$'$)] Suppose that $(g,h) \in G^2$. Then 
$\alpha_h( X_h \cap X_{gh} ) = X_g \cap X_{h^{-1}}$.
If $x \in X_h \cap X_{gh}$, then 
$\alpha_g(\alpha_h (x)) = \alpha_{gh}(x)$.

\end{itemize}
If $G$ is a group and we for every $g \in G$, put 
$D_g = X_{g^{-1}}$, then it is easy to see that
the axioms (GR1$'$)-(GR3$'$) coincide with the 
axioms (G1$'$)-(G3$'$). 
\end{rem}

\begin{rem}\label{sharpening}
Theorem \ref{maintheorem} can be slightly sharpened
in the case of partial groupoid actions.
Namely, suppose that $G$ is a groupoid and $X$ is a set
equipped with a partial groupoid action by $G$.
Let $Y$ be the universal globalization of $X$,
as it was defined in Section \ref{sectionglobalization},
with corresponding injective $G$-function $i : X \rightarrow Y$.
Suppose that $Z$ is another globalization 
of the partial action by $G$ on $X$
with corresponding injective $G$-function $j : X \rightarrow Z$.
Define the unique $G$-function $k : Y \rightarrow Z$,
subject to the relation $k \circ i = j$,
as in Section \ref{sectionglobalization}.
Analogous to the group case \cite[Theorem 3.4]{kellendonk},
we can show that $k$ is {\it injective}.
To this end, take $g,h \in {\rm mor}(G)$ and $x,x' \in X$
such that $(g,x),(h,x') \in \overline{X}$
and $k( [g,x] ) = k( [h,x'] )$. This implies that 
$g \cdot j(x) = h \cdot j(x')$.
From (GR2) and (GR3), we get that
$h^{-1} \cdot (g \cdot j(x))$ is defined and equal to $j(x')$.
But then there must exist $(g_1,x_1) \in \overline{X}$
such that $c(h) = c(g_1)$ and $[g,x] = [g_1,x_1]$.
Thus, from (GR3), we get that
$j(x') = h^{-1} \cdot (g \cdot j(x)) = (h^{-1} g_1) \cdot j(x_1)$.
From Remark \ref{injectivecategory}, we get that
$(h^{-1} g_1) \cdot x_1$ is defined and equal to $x'$.
Thus from (GR3), we get that $g_1 \cdot x_1$ 
is defined and equal to $h \cdot x'$.
But from Proposition \ref{isdefined}, we get that
$g \cdot x$ also is defined and equal $g_1 \cdot x_1$.
Thus $g \cdot x = h \cdot x'$.
From this we get that
$(g,x) \sim (c(g) , g \cdot x) \sim (c(h) , h \cdot x') \sim (h,x')$.
Thus $[g,x] = [h,x']$.
\end{rem}

\begin{rem}\label{comparegilbert}
As mentioned in the introduction, 
Gilbert \cite{gilbert} has shown a globalization
result analogous to our Theorem \ref{maintheorem}
in the case when the category $G$ is an {\it ordered} groupoid.
However, he is only able to show that mediating $G$-functions
are partially injective (see \cite[Proposition 4.10]{gilbert}).
This depends on the fact that our relation $\sim$
is stricter (see Remark \ref{comparesim})
than the one defined in loc. cit. and, hence,
our equivalence relation $\simeq$
is also stricter than Gilbert's
(see \cite[p. 190]{gilbert}).
Therefore we have fewer equivalence classes in $Y$
which makes all our mediating $G$-functions injective.
For a concrete example displaying this difference,
see Example \ref{example4} below.
\end{rem}

Now we calculate the universal globalization of $X$, in two cases,
when $G$ is the smallest non-discrete groupoid with two objects.

\begin{exa}\label{example2}
Let $G$ be the groupoid having ${\rm ob}(G) = \{ e , f \}$
and ${\rm mor}(G) = \{ e,f,g,g^{-1} \}$ where $g : e \rightarrow f$
(so that $g^{-1} : f \rightarrow e$).
Let the set $X = \{ 1,2,3 \}$ be equipped with a partial 
groupoid action by $G$ defined by the relations
$e \cdot 1 = 1$,
$e \cdot 2 = 2$,
$f \cdot 2 = 2$, 
$f \cdot 3 = 3$,
$g \cdot 2 = 2$, and
$g^{-1} \cdot 2 = 2.$
Then it is clear that 
$X_e = \{ 1,2 \}$,
$X_f = \{ 2,3 \}$ and 
$X_g = {}_g X = X_{g^{-1}} = {}_{g^{-1}} X = \{ 2 \}.$
Also 
$\overline{X} = \{ (e,1) , (e,2) , (f,2) , (f,3) , (g,1) , (g,2),
(g^{-1},2) , (g^{-1},3) \}.$
However since
$(e,2) \sim (f,2)$,
$(g,2) = (fg,2) \sim (f, g\cdot 2) = (f,2)$ and
$(e,2) = (g g^{-1} , 2) \sim (g^{-1} , g \cdot 2) = (g^{-1} , 2),$
we get that 
$$Y = \{ [e,1],[e,2],[f,3],[g,1],[g^{-1},3] \}.$$
The global category action by $G$ on $Y$ is given by
$e \cdot [e,1] = [e,1]$, 
$e \cdot [e,2] = [e,2]$,
$g \cdot [e,1] = [g,1]$,
$g \cdot [e,2] = [e,2]$, 
$f \cdot [f,3] = [f,3]$, 
$f \cdot [g,1] = [g,1]$,
$g^{-1} \cdot [e,2] = [e,2]$, 
$g^{-1} \cdot [g,1] = [e,1]$ and
$g^{-1} \cdot [f,3] = [g^{-1},3]$.
The injective $G$-function $i : X \rightarrow Y$ is defined by
$i(1) = [e,1]$, 
$i(2) = [e,2]$ and 
$i(3) = [f,3].$
\end{exa}

\begin{exa}\label{example4}
Let $G$ be the groupoid from Example \ref{example2}.
Let the set $X = \{ 1,2,3 \}$ be equipped with a partial 
groupoid action defined by the relations
$e \cdot 1 = 1$,
$e \cdot 2 = 2$,
$e \cdot 3 = 3$, 
$f \cdot 2 = 2$, 
$f \cdot 3 = 3$, 
$g \cdot 1 = 2$, 
$g \cdot 2 = 3$, 
$g^{-1} \cdot 2 = 1$ and 
$g^{-1} \cdot 3 = 2.$
Note that this is the same example of partial groupoid actions 
as in Gilbert \cite[Example 4.2]{gilbert}.
It is clear that 
$X_e = \{ 1,2,3 \}$,
$X_f = \{ 2,3 \}$, 
$X_g = \{ 1,2 \}$ and 
$X_{g^{-1}} = \{ 2,3 \}.$
Also 
$$\overline{X} = \{ (e,1) , (e,2) , (e,3) , (f,2) , (f,3) ,
(g,1) , (g,2) , (g,3) , (g^{-1},2) , (g^{-1},3) \}.$$
However since
$(g,1) = (fg,1) \sim (f,g \cdot 1) = (f,2) \sim (e,2)$,
$(g^{-1},3) = (e g^{-1},3) \sim (e , g^{-1} \cdot 3) = (e,2)$ and
$(g,2) = (fg,2) \sim (f ,g \cdot 2) = (f , 3) \sim (e,3)$,
we get that 
$$Y = \{ [e,1] , [e,2] , [e,3] , [g,3] \}.$$
Note that, with the definition of universal globalization 
that Gilbert \cite[Example 4.2]{gilbert} suggests, then
we get as many as {\it six} equivalence classes in the quotient space,
whereas we only get {\it four}.
The global category action by $G$ on $Y$ is given by
$e \cdot [e,1] = [e,1]$
$e \cdot [e,2] = [e,2]$,
$e \cdot [e,3] = [e,3]$,
$f \cdot [e,2] = [e,2]$,
$f \cdot [e,3] = [e,3]$,
$f \cdot [g,1] = [g,1]$,
$g \cdot [e,1] = [e,2]$,
$g \cdot [e,2] = [e,3]$ and
$g \cdot [e,3]=[g,3]$.
The injective $G$-function $i : X \rightarrow Y$ is defined by
$i(1) = [e,1]$,
$i(2) = [e,2]$ and 
$i(3) = [e,3].$
\end{exa}

\section{Continuous Partial Actions}\label{sectioncontinuous}

Throughout this section, $X$ denotes a topological space
and $G$ denotes a category which acts partially on $X$.
In this section, we state our conventions on 
partial continuous functions and topological categories,
and we define what it should mean for a 
topological category $G$ to act continuously partially 
on a topological space $X$
(see Definition \ref{definitioncontinuousaction}).
Then we show Theorem \ref{continuousmaintheorem}.
At the end of this section, we also show that if the category is star open
and the action is graph open (see Definition \ref{definitionstaropen}),
then the embedding of $X$, into a universal globalization,
is an open map (see Proposition \ref{open}).

\subsection*{Conventions on continuous partial functions}
Suppose that $A$ and $B$ are topological spaces and that 
$f : A \rightarrow B$ is a partial function, defined on $C \subseteq A$.
We say that $f$ is continuous as a partial map if the
function $f : C \rightarrow B$ is continuous, where
we let $C$ have the relative topology induced from $A$.

\subsection*{Conventions on topological categories}
We say that $G$ is a {\it topological category} if 
${\rm mor}(G)$ is a topological space and the
partial composition
${\rm mor}(G) \times {\rm mor}(G) \rightarrow {\rm mor}(G)$
is continuous. Here we let 
${\rm mor}(G) \times {\rm mor}(G)$ be equipped
with the product topology.

\vspace{2mm}

We assume, from now on, that $G$ is a topological category.

\begin{rem}
The term ``topological category'' has been used in 
completely different senses e.g in 
\cite{brummer}, 
\cite{ehresmann} and 
\cite{holmann}
which we do not consider here.
The term ``topological groupoid'' has been used
by authors in more or less the same sense that 
we do with the only exception that often
it is assumed that ${\rm ob}(G)$ is a topological space
and the maps $d : {\rm mor}(G) \rightarrow {\rm ob}(G)$
and $d : {\rm mor}(G) \rightarrow {\rm ob}(G)$ are assumed
to be continuous, see e.g. \cite{kirill}.
\end{rem}

\begin{defi}\label{definitioncontinuousaction}
We say that $G$ is a {\it continuous partial category action}
on $X$ if the following two axioms hold.
\begin{itemize}

\item[(CA1)] If $e \in {\rm ob}(G)$, then 
$X_e = \{ x \in X \mid e \cdot x \ {\rm is} \ {\rm defined} \}$ 
is open in $X$.

\item[(CA2)] The partial action
${\rm mor}(G) \times X \rightarrow X$
is continuous. Here we let 
${\rm mor}(G) \times X$ be equipped with 
the product topology.

\end{itemize}
\end{defi}

\begin{defi}\label{continuousequivariant}
Suppose that $Y$ is another topological space, 
equipped with a continuous partial action by $G$,
and there is a continuous $G$-function $i : X \rightarrow Y$.
If the action by $G$ on $Y$ is global, then 
$Y$ is called a {\it topological globalization} of $X$.
We say that such a globalization is {\it universal}
if for every continuous $G$-function $j : X \rightarrow Z$,
where $Z$ is a topological space equipped with a global continuous category 
action by $G$, there is a unique continuous $G$-function
$k : Y \rightarrow Z$ such that $j = k \circ i$.
\end{defi}

\subsection*{Proof of Theorem \ref{continuousmaintheorem}}

We wish to show that $Y$ is a universal globalization of $X$.
All the work has already been done in the previous sections.
What remains are the ``topological'' parts.

First of all, let $\overline{X}$ have the relative topology
induced from the product topology on ${\rm mor}(G) \times X$.
Then the partial set action 
${\rm mor}(G) \times \overline{X}
\rightarrow \overline{X}$, denoted by $\beta$, is continuous.
Indeed, put 
$\Gamma' = \{ (g , (h,x)) \in {\rm mor}(G) \times \overline{X} \mid d(g) = c(h) \}$
and let $\Gamma'$ have the relative topology induced
from the product topology on ${\rm mor}(G) \times {\rm mor}(G) \times X$.
We need to show that the function $\beta : \Gamma' \rightarrow \overline{X}$
is continuous.
Take an open $U \subseteq {\rm mor}(G)$ and an open $V \subseteq X$.
Let $m$ denote the multiplication $G^2 \rightarrow {\rm mor}(G)$.
Then $\beta^{-1}( (U \times V) \cap \overline{X} ) = 
( m^{-1}(U) \times V ) \cap \Gamma'$ 
which is open in $\Gamma'$, since $m$ is continuous.

Let $Y$ have the quotient topology induced from 
the topology from $\overline{X}$.
Recall that this means that a subset $U$ of $Y$
is open if and only if $q^{-1}(U)$ is an open subset of $\overline{X}$, 
where $q$ denotes the quotient map $\overline{X} \rightarrow Y$.
Now we show that the partial set action
of ${\rm mor}(G)$ on $Y$ is continuous.
Define an equivalence relation $\equiv$ on $\Gamma'$
by saying that $(g , (h,x)) \equiv (g' , (h',x'))$ when
$g = g'$ and $(h,x) \simeq (h',x')$.
Put $\Gamma'' = \Gamma' / \equiv$
and equipp $\Gamma''$ with the quotient 
topology. 
Since $q$ and $\beta$ are continuous, we get that
$q \circ \beta : \Gamma' \rightarrow Y$ is continuous.
Also, since the action of ${\rm mor}(G)$ on $Y$
is well defined, we get that $q \circ \beta$ respects $\equiv$. 
Thus, by the universal property of $\equiv$,
there is a unique continuous map
$\beta' : \Gamma'' \rightarrow Y$
such that $\beta'[ (g , (h,x) ] = [ gh,x ]$,
for $(g,h) \in G^2$ and $x \in X_{d(h)}$.
Thus, the partial set action
of ${\rm mor}(G)$ on $Y$ is continuous. 

Now we show that the function
$i : X \rightarrow Y$, as it was defined in 
Section \ref{sectionglobalization}, is continuous.
First define a relation $E : X \rightarrow {\rm mor}(G)$
by $E(x) = \{ e \in {\rm ob}(G) \mid e \cdot x \ {\rm is} \ {\rm defined} \}$,
for $x \in X$.
Next, define a relation $R : X \rightarrow \overline{X}$
by $R(x) = E(x) \times \{ x \}$, for $x \in X$.
Then $i = q \circ R$ (composition of relations).
Since $q$ is continuous, we only need to show that $R$
is a continuous relation (in the sense that
inverse images of open sets are open).
To this end, take an open $U$ in ${\rm mor}(G)$
and an open $V$ in $X$. Then 
$R^{-1}(U \times V) = 
\{ x \in X \mid E(x) \subseteq U \ {\rm and} \ x \in V \} =
\{ x \in V \mid E(x) \subseteq U \} = 
V \cap E^{-1}(U \cap {\rm ob}(G)) =
\cup_{e \in U \cap {\rm ob}(G)} V \cap E^{-1}(e) = 
\cup_{e \in U \cap {\rm ob}(G)} V \cap X_e$ which,
by (CA1), is open.

Finally, we show that if $j : X \rightarrow Z$
is another topological globalization of $X$,
then the map $k : Y \rightarrow Z$, defined by
$k([g,x]) = g \cdot j(x)$, for $[g,x] \in Y$, is continuous.
By the universal property of the quotient topology on $Y$,
it follows that it is enough to show that 
the partial function $K : \overline{X} \rightarrow Z$,
defined by $K( (g,x) ) = g \cdot j(x)$, for $(g,x) \in \overline{X}$,
is continuous. To this end,
first define the partial function
$L : \overline{X} \rightarrow {\rm mor}(G) \times Z$
by $L( ( g,x) ) = (g , j(x))$, for $(g,x) \in \overline{X}$.
Then $L$ is continuous. Indeed, take an open $U \subseteq {\rm mor}(G)$
and an open $V \subseteq Y$. Then
$L^{-1}(U \times V) = (U \times j^{-1}V) \cap \overline{X}$
is open in $\overline{X}$.
Next, let $K$ denote the continuous partial category
action ${\rm mor}(G) \times Z \rightarrow Z$.
Since $k = K \circ L$, we get that $k$ is continuous.
\hfill $\square$

\begin{defi}\label{definitionstaropen}
We say that $G$ is {\it star open} if for every $e \in {\rm ob}(G)$,
the set $d^{-1}(e)$ is open in ${\rm mor}(G)$.
We say that the partial action of $G$ on $X$ is
{\it graph open} if the set
$\Gamma = \{ (g,x) \in {\rm mor}(G) \times X \mid
g \cdot x \ {\rm is \ defined} \}$ is open in ${\rm mor}(G) \times X$.
\end{defi}

\begin{rem}
If $G$ is a group (or monoid), then $G$ is always star open.
Indeed, if $e$ denotes the identity element of $G$,
then $d^{-1}(e) = G$.
\end{rem}

\begin{prop}\label{open}
If $G$ is star open and the partial action of $G$ on 
$X$ is graph open, then $i : X \rightarrow Y$ is open.
\end{prop}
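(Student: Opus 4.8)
The plan is to verify openness directly from the quotient topology on $Y$. Fix an open set $V \subseteq X$; I must show $i(V)$ is open in $Y$, and since $Y$ carries the quotient topology this is equivalent to showing that $q^{-1}(i(V))$ is open in $\overline{X}$, where $q : \overline{X} \to Y$ is the quotient map. The entire argument thus reduces to producing an explicit, manifestly open description of the saturated set $q^{-1}(i(V))$. Writing $\theta : \Gamma \to X$, $\theta(g,x) = g \cdot x$, for the action restricted to the graph $\Gamma$ of Definition \ref{definitionstaropen}, the claim I would establish is that $q^{-1}(i(V)) = \{ (g,y) \in \overline{X} \mid \mbox{$g \cdot y$ is defined and $g \cdot y \in V$} \} = \Gamma \cap \theta^{-1}(V)$.

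This identification is the crux, and the step I expect to demand the most care. For the inclusion ''$\subseteq$'', suppose $[g,y] \in i(V)$, say $[g,y] = i(x) = [e,x]$ with $x \in V$ and $e \in {\rm ob}(G)$ such that $e \cdot x$ is defined (so $e \cdot x = x$ by (C1)). Since $[g,y] = [e,x]$ and $e \cdot x$ is defined, Proposition \ref{isdefined} forces $g \cdot y$ to be defined with $g \cdot y = e \cdot x = x \in V$. For ''$\supseteq$'', suppose $g \cdot y$ is defined and $g \cdot y \in V$; taking $h = g$ in Definition \ref{definitionsim}(i) yields $(g,y) \sim (c(g), g \cdot y)$, whence $[g,y] = [c(g), g \cdot y] = i(g \cdot y) \in i(V)$. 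Finally, $(g,y) \in \Gamma$ already gives $(g,y) \in \overline{X}$ by (C2), so the middle set indeed lies in $\overline{X}$ and the three descriptions agree.

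Once the identification is in place the topology is immediate. The graph-open hypothesis states exactly that $\Gamma$ is open in ${\rm mor}(G) \times X$, and since $\Gamma \subseteq \overline{X}$ by (C2), $\Gamma$ is open in $\overline{X}$. By (CA2) the map $\theta$ is continuous, so $\theta^{-1}(V)$ is open in $\Gamma$, hence open in $\overline{X}$; as $\theta^{-1}(V) \subseteq \Gamma$ this is precisely the set $q^{-1}(i(V))$, which is therefore open. Thus $i(V)$ is open in $Y$ and $i$ is an open map. The graph-open hypothesis is what powers this route; the star-open hypothesis is naturally accommodated by rewriting $q^{-1}(i(V)) = \bigcup_{e \in {\rm ob}(G)} \big( (d^{-1}(e) \times X) \cap \Gamma \cap \theta^{-1}(V) \big)$, where openness of each $d^{-1}(e)$ keeps the individual terms open.
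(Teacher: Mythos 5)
Your proof is correct and is essentially the paper's own argument: both hinge on the identification $q^{-1}(i(V)) = \Gamma \cap \theta^{-1}(V)$ (the paper writes this set as $\alpha^{-1}(V)$), after which (CA2) and graph openness give openness in ${\rm mor}(G) \times X$, hence in $\overline{X}$. Two small points in your favor: you actually verify the two inclusions of the key identification, which the paper merely asserts, and your closing observation that star openness is dispensable is accurate --- the paper invokes it only to show that $\overline{X}$ is open in ${\rm mor}(G) \times X$, a step that is logically superfluous since any subset of $\overline{X}$ that is open in the ambient product is automatically open in the relative topology on $\overline{X}$.
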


\begin{proof}
Take an open subset $U$ of $X$.
By the definition of the quotient topology on $Y$,
we need to show that $q^{-1}(i(U))$ is open in $\overline{X}$.
To this end, note that
$q^{-1}( i(U) ) = \alpha^{-1}(U)$
which is open in $\Gamma$ since, by (CA2), $\alpha$ is continuous.
Since the action is graph open, we get that $\Gamma$ is open in ${\rm mor}(G) \times X$.
Thus $q^{-1}( i(U) )$ is also open in 
${\rm mor}(G) \times X$.
However, since $\overline{X} = 
\cup_{e \in {\rm ob}(G)} d^{-1}(e) \times X_e$,
and $G$ is star open in ${\rm mor}(G)$,
we get, from (CA1), that $\overline{X}$ is open in 
${\rm mor}(G) \times X$.
Thus $q^{-1}( i(U) )$ is open in $\overline{X}$.
\end{proof}

\end{document}